\documentclass[12pt]{article}
\usepackage{latexsym}
\usepackage{amsfonts}
\usepackage{amscd}
\usepackage{amsmath, amsthm, amssymb}
\usepackage[all]{xy}
\newtheorem{thm}{Theorem}[section]
\newtheorem{cor}[thm]{Corollary}
\newtheorem{lem}[thm]{Lemma}
\newtheorem{prop}[thm]{Proposition}
\newcommand{\rk}{\mathrm{rank}}

\newcommand{\lie}[1]{\mathfrak{#1}}

\newcommand{\Z}{\mathbb{Z}}
\newcommand{\R}{\mathbb{R}}
\newcommand{\C}{\mathbb{C}}
\newcommand{\Q}{\mathbb{Q}}

\newcommand{\co}{:}
\newcommand{\ignore}[1]{}
\theoremstyle{definition}

\newtheorem{define}{Definition}
\newtheorem{rmk}{Remark}
\newtheorem{example}{Example}

\textwidth 5.5 in \oddsidemargin .4 in \topmargin 0 in \headheight
-.75 in \textheight 10 in

\title{Classifying spaces of twisted loop groups}
\author{Thomas J. Baird}
\date{}
\begin{document}
\maketitle

\begin{abstract}
We study the classifying space of a twisted loop group $L_{\sigma}G$ where $G$ is a compact Lie group and $\sigma$ is an automorphism of $G$ of finite order modulo inner automorphisms. Equivalently, we study the  $\sigma$-twisted adjoint action of $G$ on itself. We derive a formula for the cohomology ring $H^*(BL_{\sigma}G)$ and explicitly carry out the calculation for all automorphisms of simple Lie groups. More generally, we derive a formula for the equivariant cohomology of compact Lie group actions with constant rank stabilizers.
\end{abstract}

\section{Introduction}

Let $G$ be a compact, connected Lie group and let $\sigma \in Aut(G)$ be an automorphism of $G$.  The \emph{twisted loop group} $ L_{\sigma}G$ is the topological group $ L_{\sigma}G$ of continuous paths $\gamma: I \rightarrow G$ satisfying $\gamma(1) = \sigma(\gamma(0))$, with point wise multiplication and compact-open topology. In the special case that $\sigma$ is the identity automorphism, $L_{\sigma}G$ is the usual (continuous) loop group $LG$.\footnote{We work with continuous loops throughout, but by work of Palais (\cite{p} Thm 13.14) the homotopy type of $BL_{\sigma}G$ is unchanged if we work for $C^k$ loops for $k\geq 0$  or $L_p^k$ loops for $k > 1/p$}   The main result  of this paper is a formula for the cohomology ring of the classifying space $H^*(BL_{\sigma}G)$.

The isomorphism type of $L_{\sigma}G$ depends only on the outer automorphism $[\sigma] \in Out(G) = Aut(G)/Inn(G)$ represented by $\sigma$ (see \S 4). If $G$ is semisimple, then the outer automorphism group $Out(G)$ is naturally isomorphic to the automorphism group of the Dynkin diagram of $G$, so $Out(G)$ is finite.

\begin{thm}\label{thm0}
Let $G$ be a semisimple compact, connected Lie group with Weyl group $W$ and let  $\sigma \in Aut(G)$ be an automorphism with corresponding outer automorphism $[\sigma] \in Out(G)$.  Let $G^{\sigma}$ denote the subgroup of elements fixed by $\sigma$, with identity component $G^{\sigma}_0$.  Then the inclusion of $G^{\sigma}_0 \subseteq G$ induces an injection in cohomology
$$ H^*(BL_{\sigma} G; F) \hookrightarrow H^*(BL G^{\sigma}_0;F) $$
for coefficient fields $F$ of characteristic coprime to the order of $W$, the order of $[\sigma]$, and to the number of path components of $G^{\sigma}$. The image of the injection is the ring of invariants $$ H^*(BL_{\sigma} G; F) \cong H^*(BL G^{\sigma}_0;F)^{W_{\sigma}}$$
under an action by a subgroup $W_{\sigma} \subseteq W$. Specifically, $W_{\sigma} = N_G(T^{\sigma})/T$ is the quotient of the normalizer of a maximal torus $T^{\sigma} \subseteq G_0^{\sigma}$ by a maximal torus $T \subseteq G$.  
\end{thm}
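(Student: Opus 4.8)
The plan is to restate everything as a computation in Borel-equivariant cohomology and then run a ``Weyl reduction'' adapted to the twisted setting. After replacing $\sigma$ by a finite-order representative of $[\sigma]$ and forming $\widehat G = G \rtimes \langle \sigma \rangle$, the $G$-conjugation action on the component $G\sigma \cong G$ becomes the $\sigma$-twisted adjoint action $g\co x \mapsto gx\sigma(g)^{-1}$, and $EG \times_G (G\sigma)$ is a model for $BL_{\sigma}G$; likewise $BLG^{\sigma}_0 \simeq EG^{\sigma}_0 \times_{G^{\sigma}_0} G^{\sigma}_0$. The inclusion $G^{\sigma}_0 \hookrightarrow G$ intertwines conjugation of $G^{\sigma}_0$ on itself (as $\sigma$ is trivial on $G^{\sigma}_0$) with the twisted adjoint action, so the map of the theorem is the restriction $H^*_G(G;F) \to H^*_{G^{\sigma}_0}(G^{\sigma}_0;F)$. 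It therefore suffices to prove this restriction is injective with image the $W_{\sigma}$-invariants.

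Next I would record the structure needed. The stabilizer of $x$ is $G^{\mathrm{Int}(x)\circ\sigma}$, and since $\mathrm{Int}(x)\circ\sigma$ has the same finite outer class as $\sigma$, all stabilizers are reductive of the same rank $r := \rk G^{\sigma}_0$ — the action has constant-rank stabilizers. Fix a $\sigma$-stable maximal torus $T \subseteq G$ for which $S := (T^{\sigma})_0$ is a maximal torus of $G^{\sigma}_0$. Then $Z_G(S) = T$: $Z_G(S)$ is connected reductive, $\sigma$ fixes the central torus $S$ pointwise and $S$ is maximal among $\sigma$-fixed points of $Z_G(S)$, which by the structure theory of finite-order automorphisms forces the derived group of $Z_G(S)$ to be trivial. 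Hence $N_G(S) \subseteq N_G(Z_G(S)) = N_G(T)$ and $W_{\sigma} = N_G(S)/T = N_G(S)/Z_G(S)$ is a finite subgroup of $W$. Two elementary computations complete the picture: the $S$-fixed locus of the twisted action is $\{x \co x^{-1}sx = \sigma(s) = s\ \forall s\in S\} = Z_G(S) = T$, and $N_G(S)$ preserves this locus with $S$ acting trivially (for $s\in S$, $sx\sigma(s)^{-1} = sxs^{-1} = x$ on $T$); moreover every $\sigma$-twisted conjugacy class meets $T$ (the twisted analogue of the classical Cartan fact; for $x\in T$ the stabilizer contains $S$, hence has $S$ as its maximal torus).

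The heart of the argument is the constant-rank Weyl reduction: the Cartan-type map $G \times_{N_G(S)} T \to G$, $(g,x)\mapsto gx\sigma(g)^{-1}$, is proper, surjective and generically a $W_{\sigma}$-fold branched cover, and I would show it induces an isomorphism $H^*_G(G;F) \xrightarrow{\ \sim\ } H^*_{N_G(S)}(T;F)$ once $\mathrm{char}\,F$ is coprime to $|W|$, $|[\sigma]|$ and $\#\pi_0(G^{\sigma})$ — this is exactly the general ``constant-rank stabilizers'' theorem, and it is where all three coprimality hypotheses are consumed ($|W|$ in passing between $G$- and $T$-equivariant cohomology, $|[\sigma]|$ in comparing the twisted action with conjugation inside $\widehat G$, and $\#\pi_0(G^{\sigma})$ through the finite stabilizers of the $T/S$-action on $T$ given by the translations $x\mapsto x\,t\sigma(t)^{-1}$). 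Granting this, the rest is bookkeeping with tori: the extension $1 \to S \to N_G(S) \to N_G(S)/S \to 1$ together with that translation description, and the isogenies $S \sim T^{\sigma} \sim T/(1-\sigma)T$, identify $H^*_{N_G(S)}(T;F)$ with $(H^*(BS)\otimes H^*(S))^{W_{\sigma}}$, while the classical computation gives $H^*_{G^{\sigma}_0}(G^{\sigma}_0;F) = (H^*(BS)\otimes H^*(S))^{W_0}$ with $W_0 = W(G^{\sigma}_0) \leq W_{\sigma}$; since $W_0$ is the Weyl group of the restricted root system, which $W_{\sigma}$ preserves, $W_0$ is normal in $W_{\sigma}$, so the $W_{\sigma}$-action descends to $H^*(BLG^{\sigma}_0;F)$, and one checks $j^*$ corresponds to the inclusion $(H^*(BS)\otimes H^*(S))^{W_{\sigma}} \hookrightarrow (H^*(BS)\otimes H^*(S))^{W_0}$ — injective with image the $W_{\sigma}$-invariants.

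I expect the main obstacle to be the constant-rank Weyl reduction $H^*_G(G;F) \cong H^*_{N_G(S)}(T;F)$: establishing that the Cartan-type map is generically a branched cover requires the full structure theory of twisted conjugacy classes (slices, that every class meets $T$, constancy of stabilizer rank), and upgrading this from a rational to a modular statement forces one to control torsion precisely, which is what dictates the exact form of the coprimality conditions. A secondary, more routine difficulty is the naturality check that $j^*$ really becomes the stated inclusion of invariant rings once all the torus identifications are in place.
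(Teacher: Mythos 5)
Your proposal follows essentially the same route as the paper: the homotopy-quotient model $EG\times_G G_{Ad_{\sigma}}$ for $BL_{\sigma}G$, constant-rank stabilizers via de Siebenthal's theorem applied to $G\rtimes\langle\sigma\rangle$, reduction to $H^*_{N_G(T^{\sigma})}(T_{Ad_{\sigma}})$ by the constant-rank Weyl reduction (the paper's Theorem \ref{thm2}, proved there with a cohomological-principal-bundle argument rather than a branched cover), and the final identification with $W_{\sigma}$-invariants via the translation action of $T$ on $T_{Ad_{\sigma}}$ and the finite covering $T^{\sigma}\to T_{Ad_{\sigma}}/T$. The outline is correct; the only minor slip is in where the coprimality hypotheses are consumed (the order of $[\sigma]$ enters through the degree of that covering, not through the comparison with conjugation in $G\rtimes\langle\sigma\rangle$), which does not affect the argument.
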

In many cases (see \S \ref{Simplifying}), $W_{\sigma}$ acts via outer automorphisms of $G_0^{\sigma}$, which are well understand. Classifying spaces of untwisted loop groups are also well understood (see Proposition \ref{untwistedcase}) so Theorem \ref{thm0} enables explicit calculation of $H^*(BL_{\sigma} G; F)$ whenever the hypotheses hold. We carry out this calculation for all automorphisms of compact, connected simple Lie groups in \S \ref{examplessect}.

More generally, we derive a formula (Proposition \ref{prop1}) for $H^*(BL_{\sigma} G; F)$ if $G$ is compact, connected and $\sigma$ is conjugate to an automorphism of finite order. The proof uses the following model of $BL_{\sigma} G$. Denote by $G_{Ad_{\sigma}}$ the left $G$-space whose underlying space is the group manifold $G$ and with \emph{twisted adjoint action}
$$ Ad_{\sigma} :G \times G_{Ad_{\sigma}} \rightarrow G_{Ad_{\sigma}} , ~~~~Ad_{\sigma}(g)(x) = gx \sigma(g^{-1}) .$$
The classifying space $BL_{\sigma}G$ is homotopy equivalent to the homotopy quotient $ EG \times_G G_{Ad_{\sigma}}$ (Lemma \ref{othermodel}).

If $\sigma$ has order $n$, then form the compact semi-direct product $\Z_n \ltimes G $ by the rule $( a, g) \cdot (b, h) = (a+b, \sigma^{-b}(g) h)$. The $G$-space $G_{Ad_{\sigma}}$ is identified with the standard adjoint action of $G \cong \{0\} \times G$ on the path component  $ \{1 \} \times G$.  By a result of de Siebenthal (see \cite{des}, last theorem in chapter II), the stabilizers $G_p$ of this action all have the same rank; this permits us to apply the following result which may be of more general interest.

\begin{thm}\label{thm2}
Let $G$ be a connected, compact Lie group and $X$ a connected, compact Hausdorff $G$-space with constant rank stabilizers.  Choose any $p \in X$ and let $T_p \subseteq G$ be a maximal torus in the stabilizer $G_p$ of $p$.  Then the inclusions $N_G(T_p) \subseteq G$ and $X^{T_p} \subseteq X$ induce an isomorphism in equivariant cohomology  $$ H^*_G(X) \cong H^*_{N_G(T_p)}(X^{T_p}) $$ for coefficient fields of characteristic coprime to order of the Weyl group of $G$.
\end{thm}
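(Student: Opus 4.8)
The plan is to realize the claimed isomorphism as the restriction map $H^*_G(X;F)\to H^*_{N_G(T_p)}(X^{T_p};F)$ induced by the inclusions $X^{T_p}\subseteq X$ and $N_G(T_p)\subseteq G$, and to prove it is an isomorphism by a change-of-groups comparison. Write $K=N_G(T_p)$ and form the compact $G$-space $Y=G\times_K X^{T_p}$ together with the $G$-equivariant map $\phi\colon Y\to X$, $[g,y]\mapsto gy$. Applying the Borel construction gives $\hat\phi\colon Y_{hG}\to X_{hG}$; since $Y_{hG}=EG\times_G(G\times_K X^{T_p})\simeq EG\times_K X^{T_p}$ computes $H^*_K(X^{T_p})=H^*_{N_G(T_p)}(X^{T_p};F)$, and $\hat\phi$ is precisely the map induced by the two inclusions above, it suffices to show $\hat\phi^*$ is an isomorphism on $H^*(-;F)$. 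First I would reduce this to a non-equivariant statement: $\hat\phi$ is a morphism of fibrations over $BG$ whose fibre map is $\phi\colon Y\to X$, and because $BG$ is simply connected the comparison theorem for the Serre spectral sequences shows $\hat\phi^*$ is an $F$-cohomology isomorphism provided $\phi^*\colon H^*(X;F)\to H^*(Y;F)$ is one.

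To prove $\phi^*$ is an isomorphism I would invoke the Vietoris--Begle mapping theorem. Since $Y$ is compact and $X$ Hausdorff, $\phi$ is closed, so it suffices to check that $\phi$ is surjective with $F$-acyclic fibres. A direct computation identifies the fibre over $x\in X$ as $\phi^{-1}(x)\cong G_x/N_{G_x}(T_p)\cong G_x^0/N_{G_x^0}(T_p)$, the space of maximal tori of $G_x^0$. Surjectivity is where the hypotheses enter: by the slice theorem the stabilizers at points near $x$ are subconjugate to $G_x$, and since every stabilizer has the same rank as $G_x$, a maximal torus of a nearby stabilizer is forced to be $G$-conjugate to a maximal torus of $G_x$; hence the $G$-conjugacy class of a maximal torus of $G_x$ is locally constant in $x$, and therefore constant because $X$ is connected. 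In particular every stabilizer possesses a maximal torus $G$-conjugate to the maximal torus $T_p$ of $G_p$, so $G\cdot X^{T_p}=X$ and $\phi$ is onto.

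It remains to see that the fibres $G_x^0/N_{G_x^0}(T_p)$ are $F$-acyclic for the coefficient fields in question. This is the usual transfer computation: the $|W(G_x^0)|$-fold covering $G_x^0/T_p\to G_x^0/N_{G_x^0}(T_p)$ yields $H^*(G_x^0/N_{G_x^0}(T_p);F)\cong H^*(G_x^0/T_p;F)^{W(G_x^0)}$ when $\mathrm{char}(F)\nmid|W(G_x^0)|$, and combining this with the freeness of $H^*(BT_p;F)$ over $H^*(BG_x^0;F)=H^*(BT_p;F)^{W(G_x^0)}$ collapses the right-hand side to $F$. The hypothesis that $\mathrm{char}(F)$ is coprime to $|W(G)|$ is what one uses here, via the fact that the Weyl group order of a stabilizer divides $|W(G)|$ --- this divisibility is the point the de Siebenthal-type structure of the stabilizers must supply. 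Granting $F$-acyclicity of the fibres, Vietoris--Begle makes $\phi^*$ an isomorphism, and together with the reduction of the first paragraph this proves the theorem.

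I expect the real obstacle to be this last point: controlling the fibres $G_x^0/N_{G_x^0}(T_p)$ and verifying that coprimality to $|W(G)|$ alone forces $F$-acyclicity of all of them simultaneously --- equivalently, that every prime dividing some $|W(G_x^0)|$ divides $|W(G)|$. By comparison, the spectral-sequence reduction, the slice-theorem argument (valid for compact Hausdorff $G$-spaces by Palais's slice theorem), and the Vietoris--Begle step are routine once that acyclicity is secured.
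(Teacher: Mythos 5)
Your proposal follows essentially the same route as the paper: it also forms $G\times_{N(T_p)}X^{T_p}\to X$, proves surjectivity via local cross-sections (Gleason) plus connectedness, identifies the fibres as $G_x^0/N_{G_x^0}(T_p)$, deduces that the map is a cohomology isomorphism from acyclicity of fibres (packaged there as a ``cohomological principal bundle'' rather than Vietoris--Begle), and concludes with the same change-of-groups. The fibre acyclicity that you rightly flag as the delicate point is handled in the paper only by citation to \cite{b2} \S 3, so you have not omitted an argument that the paper itself spells out.
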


The proof of Theorem \ref{thm2} is a straightforward generalization of the special case proven in Baird (\cite{b2} Thm 3.3) where the stabilizers $G_p$ were assumed to have rank equal to that of $G$. 

Twisted loop groups have been studied in relation to the representation theory of affine Lie algebras  \cite{ps, mw, w}  and to Wess-Zumino-Witten theory \cite{st}.  A special class of twisted loop groups - called \emph{real loop groups} - was introduced in \cite{b3} in the course of calculating $\Z_2$-Betti numbers of moduli spaces of real vector bundles over a real curve.  The results of the current paper will be applied in future work to study the cohomology of these moduli spaces in odd characteristic. 

\textbf{Notation:} Given a topological group $G$ and $G$-space $X$, we denote by $X_{hG}$ or $EG \times_G X$ the Borel construction homotopy quotient.

\textbf{Acknowledgements:} I would like to thank Misha Kotchetov for helpful advice about triality and the referee for pointing towards a gap in a proof appearing in an earlier draft.  This research was supported by an NSERC Discovery Grant.

\section{Cohomological Principal Bundles}

We recall some background material from \cite{b2}. Let $ f: X \rightarrow Y $ be a continuous map between topological
spaces $X$ and $Y$, and let $ \Gamma$ be a topological group
acting freely on $X$, such that $ X \rightarrow X /
\Gamma $ is a principal bundle.

\begin{define}\label{princ}
We say $(f\co X  \rightarrow Y, \Gamma )$ is a cohomological
principal bundle for the cohomology theory $H^*$ if:
\\
i) $f$ is a closed surjection
\\
ii) $f$ descends through the quotient to a map $h$,
\\
\begin{equation}\begin{CD}
\xymatrix{ X \ar[d]^{\pi} \ar[dr]^f \\
       X/\Gamma \ar[r]^h & Y}
\end{CD}\end{equation}
\\
iii) $H^*(h^{-1}(y)) \cong H^*(pt)$ for all $y \in Y$
\end{define}

Let $H^*(X;F)$ denote sheaf cohomology of the constant sheaf $F_X$, where $F$ is a field. The following is a simplification of  (\cite{b2} Corollary 2.4). 

\begin{prop}\label{corollary}Let $\Gamma$ be a compact Lie group and let $(f\co X \rightarrow Y, \Gamma )$ be a cohomological principal bundle for $H^*(.,F)$, where $X$ is a paracompact Hausdorff space. Then
$H^*(Y;F) \cong H^*(X/\Gamma;F)$.
\end{prop}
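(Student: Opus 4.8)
The plan is to realise the asserted isomorphism as the pullback $h^{*}\colon H^{*}(Y;F)\to H^{*}(X/\Gamma;F)$ along the map $h$ of Definition~\ref{princ}(ii), and to prove that $h^{*}$ is an isomorphism by the Vietoris--Begle mapping theorem --- equivalently, by collapsing the Leray spectral sequence of $h$. For this I need three ingredients: that $X/\Gamma$ and $Y$ are paracompact Hausdorff, that $h$ is a closed continuous surjection, and that every fibre $h^{-1}(y)$ is $F$-acyclic. The last of these is precisely hypothesis~(iii).

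First I would settle the point-set preliminaries. Since $\Gamma$ is a compact Lie group acting on the Hausdorff space $X$, the orbit map $\pi\colon X\to X/\Gamma$ is open, closed, and proper, and $X/\Gamma$ is paracompact Hausdorff (Hausdorff because the orbits are compact and $\pi$ is open; paracompact because a closed continuous surjection carries a paracompact space to a paracompact one). That $h$ is surjective is immediate from $f=h\circ\pi$ together with~(i). For closedness: given a closed $C\subseteq X/\Gamma$, the preimage $\pi^{-1}(C)$ is closed and $\Gamma$-saturated, so $h(C)=h(\pi(\pi^{-1}(C)))=f(\pi^{-1}(C))$ is closed because $f$ is closed. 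Thus $h$ is a closed continuous surjection, and $Y$, as its image, is paracompact Hausdorff as well.

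Now the cohomological heart. Consider the Leray spectral sequence of $h$ for sheaf cohomology, $E_{2}^{p,q}=H^{p}(Y;R^{q}h_{*}F_{X/\Gamma})\Rightarrow H^{p+q}(X/\Gamma;F)$, where $R^{q}h_{*}F_{X/\Gamma}$ is the sheaf associated to the presheaf $U\mapsto H^{q}(h^{-1}(U);F)$. Its stalk at $y\in Y$ is $\varinjlim_{U\ni y}H^{q}(h^{-1}(U);F)$. Here closedness of $h$ is decisive: for any open $V\supseteq h^{-1}(y)$ the set $U=Y\setminus h(X/\Gamma\setminus V)$ is an open neighbourhood of $y$ with $h^{-1}(U)\subseteq V$, so the $h^{-1}(U)$ are cofinal among the neighbourhoods of the fibre; since closed subspaces of the paracompact Hausdorff space $X/\Gamma$ are taut, the colimit computes $H^{q}(h^{-1}(y);F)$. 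By~(iii) this stalk is $F$ for $q=0$ and $0$ for $q>0$, and $R^{0}h_{*}F_{X/\Gamma}$ is the constant sheaf $F_{Y}$ since $h$ is surjective with connected fibres. Hence $E_{2}$ is concentrated on the single row $q=0$, where $E_{2}^{p,0}=H^{p}(Y;F)$; the spectral sequence degenerates, and its edge homomorphism, which is $h^{*}$, yields $H^{*}(Y;F)\cong H^{*}(X/\Gamma;F)$.

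The step I expect to be the real obstacle --- in the sense of requiring genuine care rather than formal manipulation --- is the stalk identification above, namely $\varinjlim_{U}H^{q}(h^{-1}(U);F)\cong H^{q}(h^{-1}(y);F)$. This is exactly where the hypotheses are spent: closedness of $h$ (which in turn rests on $\Gamma$ being compact and $f$ being closed, i.e.\ on~(i)) supplies the cofinality, and the paracompact Hausdorff hypothesis on $X$, inherited by $X/\Gamma$, supplies tautness of the fibre. Everything else --- surjectivity of $h$, the degeneration, and the identification of the edge map with $h^{*}$ --- is formal. A reader who prefers not to unwind the spectral sequence may instead invoke the Vietoris--Begle mapping theorem directly for the closed surjection $h$ with $F$-acyclic fibres (see, e.g., Bredon, \emph{Sheaf Theory}, or Spanier).
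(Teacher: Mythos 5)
Your argument is correct. The paper gives no proof of Proposition~\ref{corollary} here, presenting it as a simplification of Corollary~2.4 of \cite{b2}, and the Vietoris--Begle/Leray argument you give for the closed surjection $h$ --- closedness supplying cofinality of the sets $h^{-1}(U)$ among neighbourhoods of a fibre, paracompactness supplying tautness, and hypothesis (iii) collapsing the spectral sequence onto the row $q=0$ --- is essentially the argument of that reference.
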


We also make repeated use of the following from Bredon \cite{br} 19.2.

\begin{thm}\label{cov} Let $X$ be a topological space, let $\Gamma$ be a finite group acting on $X$ and let $\pi\co X \rightarrow X/\Gamma$ denote the quotient map onto the orbit space $X/\Gamma$.  If $F$ is a field satisfying $gcd(char(F),\#\Gamma)=1$, then

\begin{equation}
\pi^*\co H(X/\Gamma;F) \rightarrow H(X;F)^{\Gamma}
\end{equation}
is an isomorphism, where $H(X;F)^{\Gamma}$ denotes the ring of $\Gamma$ invariants.
\end{thm}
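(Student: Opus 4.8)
The plan is to prove this classical fact with sheaf cohomology, exploiting two things: the orbit map $\pi\co X\to X/\Gamma=:Y$ is \emph{finite} (closed with finite fibers), and forming $\Gamma$-invariants is an exact functor on $F$-vector spaces once $\#\Gamma$ is invertible in $F$ --- the latter being the only place the hypothesis $\gcd(\mathrm{char}(F),\#\Gamma)=1$ is used.

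First I would observe that $\pi$ is closed with no hypothesis on $X$: for $A\subseteq X$ closed, $\pi^{-1}(\pi(A))=\bigcup_{g\in\Gamma}gA$ is a finite union of closed sets, hence closed, so $\pi(A)$ is closed. Since each fiber $\pi^{-1}(y)$ is a (finite) orbit, the standard argument --- separate the points of a fiber by disjoint opens and shrink to a saturated neighborhood --- gives a natural stalk identification $(\pi_*\mathcal F)_y\cong\bigoplus_{x\in\pi^{-1}(y)}\mathcal F_x$ for every sheaf $\mathcal F$ on $X$, so $\pi_*$ is exact. Because $\pi^{-1}$ is exact and left adjoint to $\pi_*$, the functor $\pi_*$ also preserves injectives; hence it carries an injective resolution of $F_X$ to an injective, hence acyclic, resolution of $\pi_*F_X$, and therefore $H^*(X;F)\cong H^*(Y;\pi_*F_X)$, naturally in $F_X$. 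Since $\pi$ is equivariant for the trivial $\Gamma$-action on $Y$, this isomorphism is $\Gamma$-equivariant, and under it the pullback $\pi^*$ corresponds to $H^*(Y;-)$ applied to the adjunction unit $u\co F_Y\to\pi_*\pi^{-1}F_Y=\pi_*F_X$.

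Next I would split off a copy of $F_Y$. Form the averaging idempotent $e=\tfrac1{\#\Gamma}\sum_{g\in\Gamma}g\in\mathrm{End}(\pi_*F_X)$, which exists precisely because $\#\Gamma$ is invertible in $F$ and which projects onto the invariant subsheaf $(\pi_*F_X)^\Gamma=\mathrm{im}(e)$. A stalkwise computation gives $(\pi_*F_X)_y\cong F[\pi^{-1}(y)]$ as an $F[\Gamma]$-module with $\Gamma$ permuting the orbit $\pi^{-1}(y)$ transitively, so the invariant stalk is one-dimensional and the unit $u$ is a stalkwise isomorphism onto it; thus $u\co F_Y\xrightarrow{\ \cong\ }(\pi_*F_X)^\Gamma$. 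Writing $\mathcal K=\ker(e)$ we obtain a $\Gamma$-equivariant splitting $\pi_*F_X\cong F_Y\oplus\mathcal K$ with $\Gamma$ acting trivially on $F_Y$ and $e$ acting as $0$ on $\mathcal K$.

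Applying $H^*(Y;-)$ and using the naturality from the first step, this yields a $\Gamma$-equivariant decomposition $H^*(X;F)\cong H^*(Y;F)\oplus H^*(Y;\mathcal K)$ in which $\pi^*$ is the inclusion of the first summand and $\Gamma$ acts trivially there; meanwhile $e$ acts as $0$ on $H^*(Y;\mathcal K)$, so the averaging idempotent kills this summand and $H^*(Y;\mathcal K)^\Gamma=0$. Taking $\Gamma$-invariants therefore gives $\pi^*\co H^*(Y;F)\xrightarrow{\ \cong\ }H^*(X;F)^\Gamma$, and since $\pi^*$ is a ring map whose image is exactly the subring of invariants, it is a ring isomorphism onto $H^*(X;F)^\Gamma$. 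The one step I expect to need care with is the finite-map stalk formula $(\pi_*\mathcal F)_y\cong\bigoplus_{x\in\pi^{-1}(y)}\mathcal F_x$ and the resulting exactness of $\pi_*$ --- this is the geometric input and the only place the topology of $X$ enters; everything after it is a formal manipulation of the averaging idempotent.
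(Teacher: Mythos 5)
First, note that the paper itself gives no proof of this statement: it is quoted from Bredon's \emph{Sheaf Theory} (reference [Br], 19.2), whose argument is organized around the transfer homomorphism $\mu$ satisfying $\mu\circ\pi^*=\#\Gamma\cdot\mathrm{id}$ and $\pi^*\circ\mu=\sum_{g\in\Gamma}g^*$. Your argument is the sheaf-level repackaging of that transfer: the averaging idempotent $e$ is $(\#\Gamma)^{-1}$ times the fiberwise sum, and splitting $\pi_*F_X\cong F_Y\oplus\mathcal{K}$, pushing through the degenerate Leray isomorphism $H^*(X;F)\cong H^*(Y;\pi_*F_X)$, and killing $H^*(Y;\mathcal{K})^{\Gamma}$ because $e$ acts there by zero is the standard modern proof. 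The formal steps are all correct: closedness of $\pi$, exactness of $\pi_*$ and preservation of injectives giving the Leray collapse, identification of $\pi^*$ with the unit $F_Y\to\pi_*F_X$, the computation of invariants in the transitive permutation module $F[\pi^{-1}(y)]$, and the final passage to invariants.

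The step you flag as needing care is, however, a genuine hypothesis rather than a routine verification. Separating the points of an orbit by pairwise disjoint open sets requires $X$ to be Hausdorff (or at least that distinct points of a single orbit can be so separated); the statement as given allows an arbitrary topological space, and there the stalk formula $(\pi_*\mathcal{F})_y\cong\bigoplus_{x\in\pi^{-1}(y)}\mathcal{F}_x$ simply fails. For example, let $X$ be the line with two origins and $\Gamma=\Z_2$ the involution interchanging the origins, so that $X/\Gamma\cong\R$: for every open interval $V$ about $0$ the saturated open set $\pi^{-1}(V)$ is connected, so $(\pi_*F_X)_0\cong F$ rather than $F\oplus F$, and moreover $(R^1\pi_*F_X)_0\cong F$ since each $\pi^{-1}(V)$ is weakly equivalent to a circle; thus $\pi_*$ is not exact and the Leray isomorphism you rely on breaks down. (The conclusion of the theorem happens to survive in that example because $\Gamma$ acts by $-1$ on $H^1(X;F)$; proving it for arbitrary $X$ requires a construction of the transfer that does not pass through the stalk formula, e.g.\ via the canonical resolution as in Bredon.) So you should either add a Hausdorff hypothesis --- which costs nothing here, since the paper only ever applies the theorem to Hausdorff spaces such as $E\Gamma$, tori, and fixed-point sets in compact Hausdorff $G$-spaces --- or replace that one step; everything downstream of it is fine.
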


A particular example of Theorem \ref{cov} is that $B\Gamma$ is acyclic for coefficient field $F$ coprime to the order of $\Gamma$, because $H^*(B\Gamma;F) = H^*(E\Gamma /\Gamma;F) = H^*(E\Gamma;F)^{\Gamma}$ and $E\Gamma$ is acyclic because it is contractible.

\section{Cohomology of $G$-spaces with constant rank stabilizers}

Let $G$ be a compact Lie group and $X$ a left $G$-space which is compact, connected and Hausdorff. Denote by $G_x$ the stabilizer of a point $x \in X$ and by $G_x^0$ the identity component of $G_x$. For a given point $p \in X$, let $T_p$ denote a maximal torus in the identity component $G_p^0$ of $G_p$.  Define an equivariant map 
\begin{equation}\label{maptothe}
 \phi: G \times X^{T_p} \rightarrow X,~~~~\phi((g,x)) = g \cdot x 
\end{equation}
where $G$ acts on $G \times X^{T_p}$ by $g \cdot (h,x) =
(gh,x)$. The normalizer $N(T_p) = N_G(T_p)$, acts on $G \times X^{T_p}$ from the right by
\begin{equation}
(g,x) \cdot n = (gn, n^{-1} \cdot x),
\end{equation}
leaving $\phi$ invariant and commuting with the $G$ action.

\begin{prop}\label{prop0} Under the hypotheses of Theorem \ref{thm2}, the pair  $( \phi: G \times X^{T_p} \rightarrow X, N(T_p))$ is a
cohomological principal bundle. 
\end{prop}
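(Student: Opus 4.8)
The plan is to verify the three conditions of Definition \ref{princ} for the pair $(\phi\co G \times X^{T_p} \to X, N(T_p))$, where $N(T_p)$ acts on $G \times X^{T_p}$ freely on the right (this last point needs a word: $N(T_p)$ acts freely because it acts freely on the first factor $G$ by right translation). First I would establish that $\phi$ is a closed surjection. Surjectivity is the heart of the matter and I will return to it. Closedness is essentially automatic: $G$ and $X^{T_p}$ are compact (the fixed set of a compact group action on a compact Hausdorff space is closed, hence compact), so $G \times X^{T_p}$ is compact, $X$ is Hausdorff, and a continuous map from a compact space to a Hausdorff space is closed. For condition (ii), I would check that $\phi$ is invariant under the right $N(T_p)$-action — which is immediate since $\phi((gn, n^{-1}\cdot x)) = gn\cdot n^{-1}\cdot x = g\cdot x = \phi((g,x))$ — so $\phi$ descends to a map $h\co (G \times X^{T_p})/N(T_p) \to X$.

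For surjectivity of $\phi$: given $y \in X$, the constant-rank hypothesis together with de Siebenthal's theorem tells us that the stabilizer $G_y$ has rank equal to $\rk G_p$, hence equal to $\dim T_p$; I want to find $g \in G$ with $g^{-1}\cdot y \in X^{T_p}$, i.e.\ with $g^{-1} T_p g$ fixing $y$, i.e.\ with $g^{-1} T_p g \subseteq G_y$. Pick a maximal torus $T_y \subseteq G_y^0$; then $\dim T_y = \rk G_y = \dim T_p$. Both $T_p$ and $T_y$ are tori in $G$ of the same dimension contained in some maximal torus of $G$; since all maximal tori of $G$ are conjugate and any torus lies in a maximal one, there is $g \in G$ conjugating $T_p$ into $T_y \subseteq G_y$ — more carefully, choose maximal tori $S_p \supseteq T_p$ and $S_y \supseteq T_y$ of $G$, conjugate $S_p$ to $S_y$ by some $g_0$, and then inside the maximal torus $S_y$ the two subtori $g_0 T_p g_0^{-1}$ and $T_y$ have the same dimension but need not coincide; however $T_y$ is a \emph{maximal} torus of $G_y^0$, and here the full strength of de Siebenthal is needed to conclude $T_p$ (up to conjugacy) sits inside $G_y$. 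I would phrase this as: the set of stabilizers all being conjugate to subgroups of $N(T_p)\cap(\text{something})$... the cleanest route is that a maximal torus of $G_y$ is a maximal torus among tori of $G$ of dimension $\rk G$ restricted appropriately — I will instead argue directly that since $\rk G_y = \dim T_p$ and tori of a fixed dimension in $G$ form a single conjugacy class under $G$ acting on such subtori when one of them is maximal in a reductive subgroup, there is $g$ with $g T_p g^{-1} \subseteq G_y$, whence $g^{-1}\cdot y \in X^{gT_pg^{-1}\cdot} $ wait — $g T_p g^{-1}$ fixes $y$ means $T_p$ fixes $g^{-1}\cdot y$, so $g^{-1}\cdot y \in X^{T_p}$ and $\phi((g, g^{-1}\cdot y)) = y$.

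For condition (iii) I must show each fibre $h^{-1}(y)$ is $F$-acyclic. The fibre $\phi^{-1}(y) \subseteq G \times X^{T_p}$ consists of pairs $(g,x)$ with $g\cdot x = y$; projecting to the first factor identifies $\phi^{-1}(y)$ with $\{g \in G \co g^{-1}\cdot y \in X^{T_p}\} = \{g \co g^{-1} T_p g \subseteq G_y\}$, and then $h^{-1}(y) = \phi^{-1}(y)/N(T_p)$. The set $\{g \co g^{-1}T_p g \subseteq G_y\}$ is the preimage under $G \to G/N(T_p)$... better: $g^{-1}T_p g \subseteq G_y$ iff $g^{-1} T_p g$ is contained in a maximal torus of $G_y^0$ (tori are connected), and by conjugacy of maximal tori in the compact group $G_y^0$ this set is $N(T_p)\backslash \{g \co g^{-1}T_p g \subseteq \text{fixed max torus } T_y'\}$-ish; the upshot is that $h^{-1}(y)$ is homeomorphic to the homogeneous space $G_y^0$-orbit data, specifically to $W(G_y^0, T_y)\backslash (\text{stuff})$ — concretely $h^{-1}(y) \cong N_{G_y^0}(T_y)\backslash G_y^0 / (\cdots)$, and the key point is that this is a \emph{single point} set-theoretically? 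No: different $g$ give the $W(G)$-orbit. The honest computation: $h^{-1}(y)$ is identified with the set of subtori of $G_y^0$ that are $G$-conjugate to $T_p$ and have dimension $\rk G_y$, i.e.\ the set of maximal tori of $G_y^0$, which is $G_y^0/N_{G_y^0}(T_y)$, and this space is $F$-acyclic for $F$ of characteristic coprime to $\#W(G)$ because its Euler-characteristic-type cohomology is governed by the Weyl group of $G_y^0$, a subgroup related to $W(G)$ — here I would cite the untwisted/rank-equal case in \cite{b2} Theorem 3.3, since the fibre analysis there (where $G_y$ has full rank) goes through verbatim once we replace $G$ by... hmm, not quite. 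The main obstacle is precisely this fibre computation (iii): showing $G_y^0/N_{G_y^0}(T_y)$ is $F$-acyclic when $\mathrm{char}(F) \nmid \#W(G)$. I expect to handle it by the standard fact that $H^*(G'/N_{G'}(T'); F) \cong H^*(G'/T'; F)^{W(G')} \cong (H^*(G'/T';\Q)\otimes F)^{W(G')}$ and $H^*(G'/T';\Q) \cong \Q[W(G')]$ as a $W(G')$-module (a consequence of the Bruhat/Schubert cell structure, giving that $G'/T'$ has the rational cohomology of $\#W(G')$ points permuted regularly), so the invariants are $F$ in degree $0$ and vanish above — valid provided $\mathrm{char}(F) \nmid \#W(G')$, and $\#W(G_y^0)$ divides $\#W(G)$ (as $W(G_y^0)$ embeds in $W(G)$ via the constant-rank structure). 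Assembling these three verifications completes the proof that $(\phi, N(T_p))$ is a cohomological principal bundle.
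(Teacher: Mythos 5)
Your verification of closedness and of condition (ii), and your eventual identification of the fibre $h^{-1}(y)$ with $G_y^0/N_{G_y^0}(T_y)$ together with the acyclicity argument via $H^*(G'/N_{G'}(T');F)\cong H^*(G'/T';F)^{W(G')}$, all match the paper (its Lemmas \ref{lem1} and \ref{lem2} make the fibre identification precise: restricting to $y\in X^{T_p}$, one has $\phi^{-1}(y)\cong G_y^0N(T_p)$ as a right $N(T_p)$-space, whence $h^{-1}(y)\cong G_y^0/N_{G_y^0}(T_p)$). The genuine gap is in surjectivity. You try to produce $g$ with $gT_pg^{-1}\subseteq G_y$ by pure Lie theory, and the step you lean on --- that two subtori of $G$ of the same dimension are $G$-conjugate provided one is a maximal torus of a (reductive) subgroup --- is false. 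For instance, in $G=SU(2)\times SU(2)$ the circle $T\times\{1\}$ and the diagonal circle $\{(t,t)\}$ are maximal tori of the rank-one subgroups $SU(2)\times\{1\}$ and the diagonal $SU(2)$ respectively, have the same dimension, and lie in a common maximal torus of $G$, yet are not conjugate in $G$. De Siebenthal's theorem does not rescue this: in this paper it is used only to verify the constant-rank hypothesis for the twisted adjoint action, not to conjugate tori of stabilizers at different points. You sensed the problem yourself (``need not coincide'') but then asserted the conclusion anyway.

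What is actually needed --- and what the paper supplies as Lemma \ref{lem0} --- is a topological argument exploiting the connectedness of $X$: the set $A=\{x\in X : T_x \text{ is conjugate to } T_p\}$ equals the image of $\phi$, which is compact and hence closed; and $A$ is open because by Gleason's local cross-section theorem each point $x$ has a neighbourhood on which every stabilizer $G_y$ is subconjugate to $G_x$, and the equal-rank hypothesis then forces a maximal torus of $G_y$ to be conjugate to one of $G_x$. Since $X$ is connected, $A=X$, which is exactly the surjectivity of $\phi$. Without some such global argument the conjugacy of $T_p$ into an arbitrary stabilizer simply does not follow, so you should replace the Lie-theoretic paragraph with this open--closed argument.
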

We begin with a few of lemmas.

\begin{lem}\label{lem0}
Under the hypotheses of Theorem \ref{thm2}, given any two points $x,y \in X$,  the maximal tori $T_x \subseteq G_x^0$ and $T_y \subseteq G_y^0$ are conjugate in $G$.
\end{lem}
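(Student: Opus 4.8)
The plan is to prove that the $G$-conjugacy class of a maximal torus of the stabilizer varies locally constantly over $X$, and then conclude by connectedness. Let $r$ denote the common rank of the stabilizers, so that $\dim T_x = r$ for every $x \in X$. Since all maximal tori of the connected compact group $G_x^0$ are conjugate inside $G_x^0$, hence inside $G$, the $G$-conjugacy class $\tau(x) := [T_x]_G$ is independent of the choice of maximal torus $T_x \subseteq G_x^0$, and so is a well-defined invariant of the point $x$. The lemma asserts exactly that $\tau$ is constant on $X$; as $X$ is connected it is enough to show $\tau$ is locally constant.

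For local constancy I would appeal to the slice theorem. Because $X$ is compact Hausdorff it is completely regular, so the action of the compact Lie group $G$ admits a slice $S$ at any prescribed point $x$, giving a $G$-invariant open neighbourhood $U \cong G \times_{G_x} S$ of the orbit $G\cdot x$. Any $z \in U$ can be written $z = [g,s]$, and then $G_z = g\,(G_x)_s\,g^{-1}$ is $G$-conjugate to the closed subgroup $H := (G_x)_s \subseteq G_x$. Applying the constant-rank hypothesis at the point $z$ gives $\rk(H) = \rk(G_z) = r$, so $H^0$ is a connected subgroup of $G_x^0$ of full rank $r$; hence any maximal torus of $H^0$ has dimension $r$ and is therefore a maximal torus of $G_x^0$ as well. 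Consequently a maximal torus of $G_z^0 = g\,H^0\,g^{-1}$ is $G$-conjugate to one of $G_x^0$, i.e. $\tau(z) = \tau(x)$. Thus $\tau$ is constant on $U$, and since $x$ was arbitrary $\tau$ is locally constant, so connectedness of $X$ finishes the argument. (Equivalently, one could show directly that $\phi$ is surjective, i.e. $G\cdot X^{T_p}=X$: the image is compact hence closed, and the same slice computation shows it is open.)

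The step I expect to be the main obstacle is the invocation of the slice theorem at this level of generality, since $X$ is only assumed to be compact Hausdorff. I would address this by citing the classical fact that a compact Lie group acting on a completely regular space always admits slices, together with the observation that a compact Hausdorff space is completely regular. The only other ingredient — that an equal-rank connected subgroup of a connected compact Lie group contains a maximal torus of the ambient group — is immediate, since all maximal tori of a connected compact Lie group share the same dimension, so any torus of that dimension is automatically maximal.
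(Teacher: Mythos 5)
Your proof is correct and follows essentially the same route as the paper: both arguments use the existence of slices/local cross sections for a compact Lie group acting on a completely regular space (the paper cites Gleason's theorem) to show that nearby stabilizers are subconjugate into $G_x$, then use the constant-rank hypothesis to upgrade this to conjugacy of maximal tori, and conclude by connectedness. The only cosmetic difference is that you phrase the final step as local constancy of the conjugacy-class function $\tau$ rather than as openness and closedness of the set of points whose torus is conjugate to $T_p$.
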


\begin{proof}
Given $p \in X$, the set  $A = \{ x \in X~|~T_x \text{ is conjugate to $T_p$ }\}$ is equal to the image of (\ref{maptothe}). The fixed point set $X^{T_p}$ is closed in $X$, hence compact. Since $G$ is compact, the product $G \times X^{T_p}$ is compact, and the image of (\ref{maptothe}) is compact, hence closed.  Thus $A$ is a closed subset of $X$.

Since $X$ is compact and Hausdorff it is completely regular.  By a theorem of Gleason (Theorem 3.3 of \cite{gl}), $G$-orbits in $X$ admit local cross sections. In particular, for every $x \in X$ there is an open neighbourhood $x \in U \subseteq X$ such that for every $y \in U$, the stabilizer $G_y$ is a subgroup of a conjugate of $G_x$.  Since $G_x$ and $G_y$ have the same rank, this implies that $T_x$ and $T_y$ are conjugate.  It follows that $A$ is an open subset of $X$. Since $X$ is connected, it follows that $A = X$.
\end{proof}

\begin{lem}\label{lem1}
Let $G$ act on $X$ from the left and let $x \in X^{T_p}$.  Then $g \cdot x
\in X^{T_p}$ if and only if $g \in N(T_p)G_x^0$, where $G_x^0$ is the
identity component of the stabilizer $G_x$.
\end{lem}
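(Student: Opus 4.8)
The plan is to prove the two implications separately; the ``if'' direction is a direct orbit computation, and the ``only if'' direction will be reduced, using the constant-rank hypothesis on stabilizers, to the conjugacy theorem for maximal tori inside a connected compact group. For the ``if'' direction I would write $g = nh$ with $n \in N(T_p)$ and $h \in G_x^0 \subseteq G_x$. Then $h$ fixes $x$, so $g\cdot x = n\cdot x$, and for every $t \in T_p$ we have $t\cdot(n\cdot x) = n\cdot\bigl((n^{-1}tn)\cdot x\bigr) = n\cdot x$ since $n^{-1}tn \in T_p$ and $x \in X^{T_p}$; hence $g\cdot x = n\cdot x \in X^{T_p}$.

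For the ``only if'' direction, assume $g\cdot x \in X^{T_p}$, so $T_p$ fixes $g\cdot x$, i.e. $T_p \subseteq G_{g\cdot x} = gG_xg^{-1}$, equivalently $g^{-1}T_pg \subseteq G_x$; being connected, $g^{-1}T_pg$ lies in $G_x^0$. The key observation is that $T_p$ is in fact a \emph{maximal} torus of $G_x^0$: indeed $x \in X^{T_p}$ gives $T_p \subseteq G_x$, hence $T_p \subseteq G_x^0$, while $\dim T_p = \rk G_p = \rk G_x = \rk G_x^0$ because all stabilizers have the same rank, so a subtorus of $G_x^0$ of that dimension is maximal. The conjugate $g^{-1}T_pg$ is then likewise a maximal torus of $G_x^0$. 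Since $G_x^0$ is connected and compact, its maximal tori are conjugate by an element of $G_x^0$, so I would choose $h \in G_x^0$ with $h(g^{-1}T_pg)h^{-1} = T_p$. Then $(hg^{-1})T_p(hg^{-1})^{-1} = T_p$, so $hg^{-1} \in N(T_p)$, hence $gh^{-1} \in N(T_p)$ and finally $g = (gh^{-1})h \in N(T_p)G_x^0$.

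The substance of the argument is concentrated in the claim that $T_p$ is maximal in $G_x^0$, and this is precisely the step where the constant-rank assumption on stabilizers is used; without it the statement fails, since two subtori of $G_x^0$ that are conjugate in $G$ need not be conjugate in $G_x^0$. Everything else is formal. It is also worth noting that this is why the identity component $G_x^0$, rather than $G_x$, is the correct factor: conjugacy of maximal tori in a compact group can always be achieved within the identity component, which is exactly the fact invoked above.
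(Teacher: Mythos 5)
Your proof is correct and follows essentially the same route as the paper's: the constant-rank hypothesis makes $T_p$ and $g^{-1}T_pg$ maximal tori of $G_x^0$, and conjugacy of maximal tori within the connected group $G_x^0$ produces the element $h$ witnessing $g \in N(T_p)G_x^0$. You simply spell out the ``if'' direction and the maximality claim in more detail than the paper, which dismisses the former as clear.
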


\begin{proof}
If $g \cdot x \in X^{T_p}$, then $g^{-1} t g \cdot x = x$ for all $t \in
T_p$, so

\begin{equation}
 g^{-1}T_pg \subset G_x.
\end{equation}
Since stabilizers have constant rank, both $T_p$ and $g^{-1}T_pg$ are maximal in $G_x$, so for
some $h \in G_x^0$, $h^{-1}g^{-1}T_pgh = T_p$, and thus $g \in N(T_p)G_x^0$.
The other direction is clear.
\end{proof}

\begin{lem}\label{lem2}
Let $( \phi: G \times X^{T_p} \rightarrow X, N(T_p))$ be defined as above.
For every $x \in X$, the orbit space $\phi^{-1}(x)/N(T_p) \cong G^0_x / N_{G^0_x}(H)$,
where $H$ is a maximal torus in $G^0_x$.
\end{lem}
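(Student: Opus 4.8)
The plan is to exploit two commuting actions on the fibre $\phi^{-1}(x)$: the stabilizer $G_x$ acts on the left by $g\cdot(h,y) = (gh,y)$ — this does preserve $\phi^{-1}(x)$, since $(gh)\cdot y = g\cdot(h\cdot y) = g\cdot x = x$ whenever $g\in G_x$ — and $N(T_p)$ acts on the right as in the text. I would first note that $\phi^{-1}(x)$ is closed in the compact Hausdorff space $G\times X^{T_p}$, hence compact, and that $G_x$ and $N(T_p)$ are compact; it follows that $\phi^{-1}(x)/N(T_p)$ is compact Hausdorff and, once the residual $G_x$-action on it is shown to be transitive, is automatically homeomorphic to a homogeneous space of $G_x$. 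So the real work is to prove transitivity of the combined $G_x\times N(T_p)$-action on $\phi^{-1}(x)$ and to compute a point stabilizer.

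To fix coordinates I would use Lemma \ref{lem0} to pick $g_0\in G$ with the point $y_0 = g_0^{-1}\cdot x$ lying in $X^{T_p}$, so that $(g_0,y_0)\in\phi^{-1}(x)$. Since $T_p\subseteq G_{y_0} = g_0^{-1}G_x g_0$, the torus $S = g_0 T_p g_0^{-1}$ lies in $G_x$, hence in $G_x^0$; and $\dim S = \rk(G_p) = \rk(G_x)$ by the constant-rank hypothesis, so $S$ is a maximal torus of $G_x^0$, and for each $g\in G_x$ the conjugate $gSg^{-1}$ is again a maximal torus of $G_x^0$.

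For transitivity, take an arbitrary $(h,y)\in\phi^{-1}(x)$. From $h\cdot y = x = g_0\cdot y_0$ one has $y = (h^{-1}g_0)\cdot y_0$, and since both $y$ and $y_0$ lie in $X^{T_p}$, Lemma \ref{lem1}, applied with the point $y_0$ and the element $h^{-1}g_0$, gives a factorization $h^{-1}g_0 = nk$ with $n\in N(T_p)$ and $k\in G_{y_0}^0 = g_0^{-1}G_x^0 g_0$. Writing $k$ in terms of an element of $G_x^0$ and using that $k$ fixes $y_0$, a direct manipulation then places $(h,y)$ in the $G_x^0\times N(T_p)$-orbit of $(g_0,y_0)$. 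For the stabilizer, an element $g\in G_x$ fixes the class of $(g_0,y_0)$ in $\phi^{-1}(x)/N(T_p)$ iff $(gg_0,y_0) = (g_0,y_0)\cdot n$ for some $n\in N(T_p)$; this forces $n = g_0^{-1}gg_0$ (the $X^{T_p}$-coordinate matching automatically because $g\in G_x$), and $g_0^{-1}gg_0\in N(T_p)$ is exactly the condition $gSg^{-1} = S$. Hence the stabilizer is $N_{G_x}(S)$.

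Combining these, the continuous, transitive action of the compact group $G_x$ on the Hausdorff space $\phi^{-1}(x)/N(T_p)$ makes the latter homeomorphic to $G_x/N_{G_x}(S)$. Finally, $G_x^0$ already acts transitively on the set of maximal tori of $G_x^0$ — equivalently on $G_x/N_{G_x}(S)$ — so the inclusion $G_x^0\subseteq G_x$ induces a homeomorphism $G_x^0/N_{G_x^0}(S)\to G_x/N_{G_x}(S)$, and conjugating $S$ inside $G_x^0$ to any prescribed maximal torus $H$ gives $G_x^0/N_{G_x^0}(S)\cong G_x^0/N_{G_x^0}(H)$, as claimed. I expect the transitivity step to be the only genuine obstacle — in particular keeping straight which factor acts on which side and checking that Lemma \ref{lem1} is being invoked with the correct point and group element — while the stabilizer computation and the upgrade from continuous bijection to homeomorphism (via compactness of the source and Hausdorffness of the target) are routine.
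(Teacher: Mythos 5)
Your argument is correct and is essentially the paper's: both hinge on Lemma \ref{lem1} to identify the fibre $\phi^{-1}(x)$ and then read off the quotient as a homogeneous space, the paper simply normalizing $x$ into $X^{T_p}$ by equivariance and identifying $\phi^{-1}(x)$ with the subset $G_x^0N(T_p)\subseteq G$ rather than carrying the conjugating element $g_0$ through an orbit--stabilizer computation. Your extra care with the passage from $G_x$ to $G_x^0$ and with upgrading the continuous bijection to a homeomorphism (compact source, Hausdorff target) makes explicit some points the paper leaves implicit.
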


\begin{proof}
We may assume by equivariance that $x \in X^T$. We have isomorphisms of right $N(T_p)$-spaces
\begin{equation}
\phi^{-1}(x) = \{ (g,y) \in G \times X^{T_p} | g \cdot y = x \} = \{(g^{-1}, g \cdot x) | g \in N(T_p) G_x^0 \} \cong G_x^0N(T_p)
\end{equation}
where the middle equality follows from the preceding lemma.  It
follows that

\begin{equation}
\phi^{-1}(x)/N(T_p) \cong G^0_xN(T_p)/N(T_p) \cong G_x^0/N_{G_x^0}(T_p).
\end{equation}
\end{proof}

\begin{proof}[Proof of Proposition \ref{prop0}]
Since both $G$ and $X$ are compact, it follows that $G \times X^{T_p}$ is compact and thus $\phi$ is closed.  From Lemma \ref{lem0}, it follows that every $G$-orbit in $X$ must intersect $X^{T_p}$, so $\phi$ is surjective. Finally, the homeomorphism $\phi^{-1}(x)/N(T_p) \cong G^0_x / N_{G^0_x}(H)$ from Lemma \ref{lem2} implies that $H^*(\phi^{-1}(x)/N(T_p);F) $ is acyclic over fields of characteristic coprime to the order of the Weyl group (as explained in \cite{b2} \S 3).   
\end{proof}

\begin{proof}[Proof of Theorem \ref{thm2}]

The map $G \times_{N(T_p)} X^{T_p} \rightarrow X$ is $G$-equivariant and a cohomology isomorphism, so it induces an isomorphism in equivariant cohomology
$$ H^*_G(X) \cong H^*_G( G \times_{N(T_p)} X^{T_p}).$$
The action of $N(T_p)$ on $G \times X^{T_p}$ is free and commutes with $G$, so we also have an isomorphism
$$H^*_G( G \times_{N(T_p)} X^{T_p}) \cong   H^*_{G \times N(T_p)} ( G \times X^{T_p}).$$
Finally, $G$ acts freely on $G \times X^{T_p}$, so we have an isomorphism
$$H^*_{G \times N(T_p)} ( G \times X^{T_p}) \cong H^*_{N(T_p)} (X^{T_p}). $$

\end{proof}

\section{Formula for compact, connected $G$}

Let $G$ be a compact, connected Lie group and $\sigma \in Aut(G)$. Let $G^{\sigma} \leq G$ denote the subgroup of elements fixed by $\sigma$, and let $T^{\sigma}$ be a maximal torus in (the identity component of) $G^{\sigma}$. Let $C(T^{\sigma})  = C_G(T^{\sigma})$ and $N(T^{\sigma}) =N_G(T^{\sigma}) $ be the centralizer and normalizer of $T^{\sigma}$ in $G$ respectively. The twisted adjoint action restricts to an action of $N(T^{\sigma})$ on $C(T^{\sigma})$, which we denote by $C(T^{\sigma})_{Ad_{\sigma}}$. The goal of this section is to prove  

\begin{prop}\label{prop1}
Let $G$ be a connected, compact Lie group and $\sigma \in Aut(G)$ an automorphism such that some conjugate $g \sigma g^{-1}$ has finite order. Then there is a cohomology isomorphism
$$ H^*(BL_{\sigma}G) \cong H^*_{N(T^{\sigma})}(C(T^{\sigma})_{Ad_{\sigma}}) $$  for coefficient fields coprime to the order of the Weyl group of $G$.
\end{prop}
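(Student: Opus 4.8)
The plan is to reduce to Theorem \ref{thm2} applied to the twisted adjoint action, and then to identify the relevant maximal-torus stabilizer and its normalizer. First I would reduce to the case where $\sigma$ itself has finite order: conjugating $\sigma$ by $g$ replaces $L_\sigma G$ by an isomorphic topological group (hence changes nothing about $BL_\sigma G$), and correspondingly replaces the $G$-space $G_{Ad_\sigma}$ by an equivariantly homeomorphic one, so $G^\sigma$, $T^\sigma$, $C(T^\sigma)$, $N(T^\sigma)$ all transport along. So assume $\sigma$ has order $n$. Using Lemma \ref{othermodel}, $H^*(BL_\sigma G) \cong H^*_G(G_{Ad_\sigma})$.

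Next I would realize $G_{Ad_\sigma}$ as a single path component of an honest adjoint action, as indicated in the introduction: form $\widehat G = \Z_n \ltimes G$ with the stated multiplication, so that $G \cong \{0\}\times G$ acts on the component $\{1\}\times G \cong G_{Ad_\sigma}$ by the restriction of the (ordinary) adjoint action of $\widehat G$ on itself. Since $\widehat G$ is compact and the adjoint action of any compact group on itself has stabilizers (centralizers of elements) all of full rank, de Siebenthal's theorem (cited in the introduction) gives that the $G$-stabilizers of points of $\{1\}\times G$ all have constant rank. Thus Theorem \ref{thm2} applies: choosing a base point $p\in \{1\}\times G$ and a maximal torus $T_p$ of the stabilizer $G_p$, we get
$$ H^*_G(G_{Ad_\sigma}) \cong H^*_{N_G(T_p)}\bigl((G_{Ad_\sigma})^{T_p}\bigr). $$

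The main obstacle — and the real content of the proof — is to identify the right-hand side with $H^*_{N(T^\sigma)}(C(T^\sigma)_{Ad_\sigma})$, i.e.\ to make a good choice of $p$ and to compute both the fixed-point set and the stabilizer-torus normalizer. The natural choice is $p = (1,e)$, i.e.\ $e\in G$ viewed in $G_{Ad_\sigma}$; then $Ad_\sigma(g)(e) = g\sigma(g^{-1}) = e$ iff $g\in G^\sigma$, so $G_p = G^\sigma$, and a maximal torus $T_p$ of $G_p^0$ is exactly a choice of $T^\sigma$. I would then check that $(G_{Ad_\sigma})^{T^\sigma} = C(T^\sigma)$: an element $x\in G$ is fixed by the $T^\sigma$-action iff $t x\sigma(t^{-1}) = x$ for all $t\in T^\sigma$, and since $\sigma$ fixes $T^\sigma$ pointwise this says $txt^{-1}=x$, i.e.\ $x\in C_G(T^\sigma)$. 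Finally one checks the residual $N(T^\sigma)$-action on $(G_{Ad_\sigma})^{T^\sigma}=C(T^\sigma)$ is precisely the restriction of the twisted adjoint action, which is immediate from the formula for the $N(T_p)$-action on $G\times X^{T_p}$ descending through $\phi$ (this is where one must be slightly careful that $N_G(T^\sigma)$ does normalize $T^\sigma$ inside $G$ — true by definition — and that $T^\sigma$ being a torus means $\sigma$ restricted to it is the identity, which was built into the choice of $T^\sigma$ as a maximal torus of $G^\sigma$). Assembling these identifications with the displayed isomorphism from Theorem \ref{thm2} yields the proposition; the hypothesis on the characteristic of $F$ is exactly the one required by Theorem \ref{thm2}.
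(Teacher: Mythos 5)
Your proposal is correct and follows essentially the same route as the paper: reduce to $\sigma$ of finite order, use Lemma \ref{othermodel} to replace $BL_{\sigma}G$ by $EG\times_G G_{Ad_{\sigma}}$, realize $G_{Ad_{\sigma}}$ as a component of the adjoint action of $\Z_n\ltimes G$ to get constant rank stabilizers via de Siebenthal, and then apply Theorem \ref{thm2} at $p=Id_G$ with the identifications $G_p=G^{\sigma}$, $(G_{Ad_{\sigma}})^{T^{\sigma}}=C(T^{\sigma})$. The one slip is your parenthetical claim that centralizers of elements of a compact group always have full rank --- this is false for disconnected groups (and for the $G$-stabilizers on the component $\{1\}\times G$) --- but it is not load-bearing, since the constant-rank conclusion you actually use is correctly attributed to de Siebenthal's theorem, exactly as in the paper's Lemma \ref{lem4}.
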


The following result is not original (it follows implicitly from \cite{w} ), but I have not been able to find a clean statement in the literature.

\begin{lem}\label{othermodel}
 There is a natural homotopy equivalence $ BL_{\sigma}G \cong EG \times_G G_{Ad_{\sigma}}$.
\end{lem}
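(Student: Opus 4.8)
The plan is to realize $BL_\sigma G$ as the base of a principal $L_\sigma G$-bundle whose total space is contractible, and then identify that total space-modulo-$G$ picture with $EG \times_G G_{Ad_\sigma}$. Concretely, let $PG$ denote the space of continuous paths $\gamma \co I \to G$ with $\gamma(0) = e$, equipped with the compact-open topology; this is contractible. The group $L_\sigma G$ should be made to act on a suitable based path space so that the quotient is $BL_\sigma G$. The cleanest route is to consider the "$\sigma$-twisted" path space $P_\sigma G = \{\gamma \co I \to G \mid \gamma(1) = \sigma(\gamma(0))\text{-type condition relative to a basepoint}\}$; more precisely I would set things up as follows. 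Let $\Omega_\sigma G$ play the role analogous to the based loop group and observe $L_\sigma G$ acts on the contractible path space $\{\gamma\co I\to G : \gamma(0)=e\}$ by an appropriately twisted pointwise multiplication so that the action is free with local sections (since $G$ is a Lie group, evaluation-type maps admit local sections), giving a model $EL_\sigma G$ and hence $BL_\sigma G$ as the quotient.

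The key identification is then the following "clutching" description. A point of $EG \times_G G_{Ad_\sigma}$ is, up to the $G$-action, a pair $(\text{point of }EG,\ x)$ with $x \in G$, where $g$ acts by $g\cdot x = gx\sigma(g)^{-1}$. I would build an explicit map by sending a $\sigma$-twisted path $\gamma$ (with $\gamma(1)=\sigma(\gamma(0))$) to the pair consisting of the underlying $EG$-data recorded by $\gamma$ restricted to $[0,\tfrac12]$ (say), together with the group element $x = \gamma(\tfrac12)^{-1}\gamma(1)\cdot(\text{correction by }\sigma)$ — the point being that the twisting condition $\gamma(1)=\sigma(\gamma(0))$ is exactly what makes this element transform under the twisted adjoint action when one changes the trivialization. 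The first step is therefore to write down this map precisely, and check it is well-defined, continuous, and equivariant for the residual symmetries; the second step is to exhibit a homotopy inverse, or equivalently to check it is a fiber homotopy equivalence of bundles over a common base, using the contractibility of the path-space models on both sides.

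An alternative, perhaps cleaner, formulation: both $BL_\sigma G$ and $EG\times_G G_{Ad_\sigma}$ fit into fibration sequences
\begin{equation}
G_{Ad_\sigma} \longrightarrow EG\times_G G_{Ad_\sigma} \longrightarrow BG,
\qquad
L_\sigma G/G_{??} \longrightarrow BL_\sigma G \longrightarrow ?,
\end{equation}
so I would instead compare them via the map $BL_\sigma G \to BLG_{\mathrm{const}}\to BG$ induced by evaluation and identify the homotopy fiber of $BL_\sigma G \to BG$ with $G_{Ad_\sigma}$, with the monodromy of the fibration over $BG$ being precisely the twisted adjoint action. This uses that $L_\sigma G$ sits in a short exact sequence $\Omega_\sigma G \to L_\sigma G \to G$ (evaluation at $0$), that $B\Omega_\sigma G \simeq$ the relevant $G_{Ad_\sigma}$-type space (via $\Omega_\sigma G$ being homotopy equivalent to $\Omega G$ with a twisted action, and $B\Omega G\simeq G$), and then invoking the general principle that for a group extension $1\to N\to \Gamma\to Q\to 1$ one has a fibration $BN\to B\Gamma\to BQ$ with $Q$ acting on $BN$ via conjugation — here conjugation by a lift of $g\in G$ realizes exactly $x\mapsto gx\sigma(g)^{-1}$ on $\pi_0$ or on the group-manifold model $BN\simeq G_{Ad_\sigma}$.

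The main obstacle I expect is the bookkeeping in the identification $B\Omega_\sigma G \simeq G_{Ad_\sigma}$ together with tracking the induced $G$-action: one must pin down the homotopy equivalence $B\Omega G\simeq G$ carefully enough that the $G$-conjugation action on $B\Omega_\sigma G$ transports to the stated twisted adjoint formula $Ad_\sigma(g)(x)=gx\sigma(g^{-1})$ and not to some variant (e.g. $\sigma(g)xg^{-1}$, which would give an equivalent but differently-presented $G$-space). The rest — contractibility of path spaces, existence of local sections, naturality in $G$ and $\sigma$ — is standard for compact Lie groups and should go through without difficulty.
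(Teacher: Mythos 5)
Your proposal correctly identifies the right framework --- a contractible path space carrying an $L_{\sigma}G$-action, the subgroup $\Omega G \le L_{\sigma}G$ of based loops acting freely, and the residual $G = L_{\sigma}G/\Omega G$ action on $PG/\Omega G \cong G$ --- and your second formulation (the fibration $B\Omega_\sigma G \to BL_\sigma G \to BG$ with monodromy the twisted adjoint action) is a viable alternative packaging of the same idea. But in both versions you stop exactly at the step that constitutes the entire content of the lemma, and you say so yourself (``the main obstacle I expect is the bookkeeping\dots one must pin down the homotopy equivalence\dots carefully enough that the action transports to the stated twisted adjoint formula''). That is not bookkeeping; it is the proof. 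The missing ingredient is a single explicit formula: let $PG = \mathrm{Maps}((I,0),(G,e))$ and let $L_{\sigma}G$ act by $(\gamma\cdot x)(t) = \gamma(t)\,x(t)\,\gamma(0)^{-1}$. One checks this preserves the basepoint condition and is an action; $PG$ is contractible, so $PG_{hL_{\sigma}G} \simeq BL_{\sigma}G$. The based loops $\Omega G$ act freely (by left multiplication), evaluation at $1$ identifies $PG/\Omega G$ with $G$, and the residual action of $g=\gamma(0)$ sends $x(1)$ to $\gamma(1)x(1)\gamma(0)^{-1} = \sigma(g)\,x(1)\,g^{-1}$, which is the twisted adjoint action up to the evident isomorphism of $G$-spaces. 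With that formula in hand the lemma is three lines; without it, nothing in your write-up actually verifies that the twist lands where claimed.

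Two smaller points. First, your ``clutching'' map built from $\gamma|_{[0,1/2]}$ and $x=\gamma(\tfrac12)^{-1}\gamma(1)\cdot(\text{correction by }\sigma)$ is not well formed as stated and is not needed; the staged quotient above avoids it entirely. Second, your worry about landing on $\sigma(g)xg^{-1}$ rather than $gx\sigma(g^{-1})$ is legitimate but harmless: $x\mapsto x^{-1}$ is an isomorphism of $G$-spaces intertwining the two, so either convention yields the statement. The fix for your proposal is therefore not a new idea but the willingness to commit to one explicit action and compute with it.
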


\begin{proof}
Consider the action of $L_{\sigma}G$ on the contractible based path space $$PG := Maps((I,0), (G, Id_G))$$ by $ (\gamma \cdot x) (t) = \gamma(t) x \gamma(0)^{-1}$.  Since $PG$ is contractible, the homotopy quotient $PG_{hL_{\sigma}G}$ is a model for $BL_{\sigma}G$.

The based loop group $\Omega G := \{ \gamma \in L_{\sigma}G~|~\gamma(0)=\gamma(1) = Id_G\}$  acts freely on $PG$, so $PG_{hL_{\sigma}G}$ is equivalent to the homotopy quotient of the residual action of $L_{\sigma}G/\Omega G \cong G$ on  $PG/\Omega G \cong G_{Ad_{\sigma}}$.
\end{proof}

The isomorphism class of $L_{\sigma}G$ is depends only on the element of the outer automorphism group $Out(G) = Aut(G)/Inn(A)$ represented by $\sigma$ (see  \cite{ps} section 3.7).  Similarly, if $\sigma' = Ad_h \circ \sigma$ represent the same outer automorphism, then the map $$G_{Ad_{\sigma}} \rightarrow G_{Ad_{\sigma'}},~~~~ x \mapsto hxh^{-1}$$ in an isomorphism of $G$-spaces. Thus we may assume without loss of generality that $\sigma$ has finite order.

\begin{lem}\label{lem4}
Let $G$ be a compact, connected Lie group. If $\sigma \in Aut(G)$ has finite order then the twisted adjoint action of $G$ on $G_{Ad_{\sigma}}$ has constant rank stabilizers.
\end{lem}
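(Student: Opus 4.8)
The plan is to recognize the twisted adjoint action as an ordinary conjugation action on a single connected component of a larger compact Lie group, and then to quote de Siebenthal.

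First I would build the ambient group. Let $n$ be the order of $\sigma$ and set $\hat G := \Z_n \ltimes G$ with multiplication $(a,g)\cdot(b,h) = (a+b,\sigma^{-b}(g)h)$, as in \S1. Since $\Z_n$ is finite, $\hat G$ is a compact Lie group; it has exactly $n$ connected components $\{k\}\times G$, and its identity component $\hat G_0 = \{0\}\times G$ is a copy of $G$. The projection $\hat G \to \Z_n$ is a group homomorphism onto an abelian group, so conjugation in $\hat G$ fixes the $\Z_n$-coordinate and hence preserves each fiber $\{k\}\times G$; in particular $\hat G_0 \cong G$ acts by conjugation on the component $\{1\}\times G$.

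Next I would verify the identification asserted in \S1: the map $\psi\co G_{Ad_\sigma}\to \{1\}\times G$, $\psi(x)=(1,\sigma^{-1}(x))$, is a $G$-equivariant homeomorphism, where $G$ acts on the target by conjugation inside $\hat G$. This is a one-line computation: conjugating $(1,z)$ by $(0,g)$ gives $(1,\sigma^{-1}(g)\,z\,g^{-1})$, while $\psi(Ad_\sigma(g)(x)) = (1,\sigma^{-1}(g)\,\sigma^{-1}(x)\,g^{-1}) = (0,g)\cdot\psi(x)\cdot(0,g)^{-1}$, so the two actions agree under $\psi$. Via $\psi$, the stabilizer of $x$ under the twisted adjoint action is carried to the centralizer of $(1,\sigma^{-1}(x))$ in $\hat G_0$, equivalently to an open (hence equal-rank) subgroup of the centralizer of $(1,\sigma^{-1}(x))$ in $\hat G$.

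Finally, as $x$ ranges over $G$, the points $(1,\sigma^{-1}(x))$ range over the entire connected component $\{1\}\times G$ of $\hat G$. By de Siebenthal's theorem (\cite{des}, last theorem of Chapter II), the centralizers (in $\hat G_0$, equivalently in $\hat G$) of the elements of a fixed component of a compact Lie group all have the same rank; hence the stabilizers of the twisted adjoint action all have the same rank, proving the lemma. I expect the only delicate point to be bookkeeping with the $\sigma$-twist so that $\psi$ is genuinely equivariant — it is precisely the factor $\sigma^{-1}$ that makes conjugation in $\hat G$ reproduce $Ad_\sigma$ — since the substantive input, de Siebenthal's constant-rank theorem, is used as a black box and applies exactly because $G_{Ad_\sigma}$ has been realized as the conjugation action on one component of a (possibly disconnected) compact Lie group.
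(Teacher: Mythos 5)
Your proposal is correct and follows the same route as the paper: realize $G_{Ad_\sigma}$ as the conjugation action of the identity component $\{0\}\times G$ on the component $\{1\}\times G$ of $\Z_n\ltimes G$, then invoke de Siebenthal's constant-rank theorem for centralizers of elements in a fixed component of a compact (disconnected) Lie group. The only difference is that you spell out the equivariant homeomorphism $x\mapsto(1,\sigma^{-1}(x))$ explicitly (and your computation checks out), whereas the paper simply cites the identification asserted in its introduction.
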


\begin{proof}
If $\sigma$ has order $n$, then it can be used to construct a semi-direct product $ \Z_n \ltimes G$.  It is explained in the introduction that the action of $G$ on $G_{Ad_{\sigma}}$ is isomorphic to standard adjoint action of $G =\{0\}\times G$ on the path component $\{1\}\times G \cong G_{Ad_{\sigma}}$.	
	
The result now follows from a fundamental property of the adjoint action of disconnected compact Lie groups found at the end chapter II of \cite{des}.
\end{proof}

\begin{proof}[Proof of Proposition \ref{prop1}]
By Proposition \ref{othermodel} we have $H^*(BL_{\sigma}G) \cong H^*_G(G_{Ad_{\sigma}})$.  By Lemma \ref{lem4} and Theorem \ref{thm2} we have
$$ H^*_G(G_{Ad_{\sigma}})  \cong H^*_{N(T_p)} (G_{Ad_{\sigma}}^{T_p}) $$
for any choice of $p \in G_{Ad_{\sigma}}$.  Choose $p = Id_G$.

The stabilizer of the identity element $Id_G \in G$ under the twisted adjoint action is exactly the subgroup $G^{\sigma}$ of elements invariant under $\sigma$.  Let $T^{\sigma}$ denote a maximal torus of $G^{\sigma}$. The restriction of the twisted adjoint action to $T^{\sigma}$ agrees with the ordinary adjoint action.  It follows that the set of $T^{\sigma}$-fixed points is precisely the centralizer  $C(T^{\sigma}) := \{ g \in G ~|~g t = t g, \text{ for all } t \in T^{\sigma} \}$. Thus
$$ H^*_{N(T_p)} (G_{Ad_{\sigma}}^{T_p})  \cong H^*_{N(T^{\sigma})}(C(T^{\sigma})_{Ad_{\sigma}}) $$ as desired.
\end{proof}

\begin{example} If $\sigma \in Aut(G)$ is the identity, we have $T^{\sigma}=T$ is a maximal torus with $N(T^{\sigma}) = N(T)$  acting on $ C(T^{\sigma}) = T$ by the standard adjoint action. Proposition \ref{prop1} gives us the formula

\begin{equation}\label{familiar}
 H^*(BLG) = H^*_G(G) \cong H^*_{N(T)}(T) \cong (H^*(T) \otimes H^*(BT))^W
\end{equation}
for coefficients coprime to the order of the Weyl group $W =N(T)/T$.
\end{example}

\begin{rmk}
If $G$ is abelian, then $N(T^{\sigma}) = C(T^{\sigma}) = G$,  so Proposition \ref{prop1} offers no improvement over Lemma \ref{othermodel}.  The formula is more interesting in the opposite extreme when $G$ is semisimple, which is the subject of Theorem \ref{thm0}.
\end{rmk}

\section{Proof of Theorem \ref{thm0}}

Assume throughout this section that $G$ is a compact, connected, semisimple Lie group, and that cohomology is taken with coefficient field $F$ of characteristic $p$ coprime to the orders of the Weyl group $W_G = N_G(T)/T$, of $[\sigma]$, and of $\pi_0(G^{\sigma})$.

\begin{lem}\label{a torus}
Let $T^{\sigma}$ be a maximal torus in $G^{\sigma}_0$ and $C(T^{\sigma})$ the centralizer of $T^{\sigma}$ in $G$. Then,
\begin{itemize}
\item[(a)]  $C(T^{\sigma}) = T$ is a maximal torus in $G$ and $T^{\sigma} = G^{\sigma}_0 \cap T$.
\item[(b)] The restriction of $\sigma$ to $T$ preserves a Weyl chamber, and thus has finite order equal to that of the outer automorphism $[\sigma] \in Out(G)$ .
\end{itemize}
\end{lem}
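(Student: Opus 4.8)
The plan is to leverage the standard structure theory of automorphisms of compact connected Lie groups that preserve a maximal torus, together with de Siebenthal's rank-constancy. Since $\sigma$ has finite order (as arranged in \S 4), a standard averaging/fixed-point argument gives a $\sigma$-stable maximal torus: pick any maximal torus $S$ of $G_0^\sigma$ and enlarge it to a maximal torus $T$ of $C_G(S)_0$; since $S$ is central in $C_G(S)$ and $\sigma$ fixes $S$ pointwise, $\sigma$ permutes the maximal tori of $C_G(S)_0$, and a finite-order automorphism of a compact connected group has a fixed maximal torus, which we may take to be $T$. So $\sigma(T) = T$.

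For part (a), the key point is that $T^\sigma := S$ actually coincides with $T$. Suppose not; then $C_G(S)$ is a connected reductive subgroup strictly larger than $T$ (it contains $T$ as a maximal torus and, since $S \subsetneq T$, it has positive semisimple rank), and it is $\sigma$-stable with $\sigma$ acting trivially on the central torus $S$. Because $S$ is a maximal torus of $G_0^\sigma$, the fixed subgroup $C_G(S)^\sigma$ can contain no torus strictly larger than $S$; but $\sigma$ restricted to the compact connected group $C_G(S)$ has finite order and fixes the maximal torus of its center, so its fixed subgroup has rank equal to $\mathrm{rank}(C_G(S)) > \dim S$ by de Siebenthal's constant-rank result (Lemma \ref{lem4} / \cite{des}) applied inside $C_G(S)$ — equivalently, the identity component of $C_G(S)^\sigma$ is a compact connected group containing $S$ and of rank $\mathrm{rank}(C_G(S))$, hence strictly larger than $S$. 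This contradicts maximality of $S = T^\sigma$ in $G_0^\sigma$. Therefore $C(T^\sigma) = C_G(S) = T$ is a maximal torus of $G$, and then $T^\sigma = S \subseteq T \cap G_0^\sigma$; the reverse inclusion holds because $T \cap G_0^\sigma$ is a torus in $G_0^\sigma$ containing the maximal torus $S$, hence equals $S$.

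For part (b), since $\sigma(T) = T$, the automorphism $\sigma$ acts on the character lattice and permutes the set of roots; it therefore permutes the Weyl chambers. The subgroup of the (finite) group of automorphisms of $(T, \text{roots})$ that preserves a fixed chamber is the group of diagram automorphisms, which maps isomorphically onto $Out(G)$; in particular a chamber-preserving automorphism is inner exactly when it is trivial. Now $\sigma$ permutes the chambers with finitely many orbits, and picking a chamber $\mathcal{C}$ in a $\sigma$-orbit of minimal size — or more directly, using that $N_G(T)/T = W$ acts simply transitively on chambers, so some $w \in W$ conjugates $\sigma$ to a chamber-preserving automorphism $\sigma' = w\sigma w^{-1}$ of $T$ — we get that $\sigma'$ is a diagram automorphism whose order equals the order of $[\sigma'] = [\sigma] \in Out(G)$. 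Since $\sigma$ and $\sigma'$ are conjugate as automorphisms of $T$ (indeed of $G$), $\sigma$ has the same order, namely the order of $[\sigma]$. The main obstacle is the rank-jumping argument in part (a): one must be careful that "$S$ maximal in $G_0^\sigma$'' genuinely forces $C_G(S) = T$, and the cleanest route is to invoke the constant-rank property of the fixed-point set of a finite-order automorphism on the connected reductive group $C_G(S)$ rather than to argue directly with root data.
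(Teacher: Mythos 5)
Your part (a) reduces the claim to showing that $C_G(S)^{\sigma}$ has rank strictly greater than $\dim S$ whenever $C_G(S)$ strictly contains a maximal torus' worth of $S$, which is the right strategy, but the step you use to get there is false. The fixed subgroup of a finite-order automorphism of a compact connected Lie group does \emph{not} in general have full rank: entry-wise complex conjugation on $SU(3)$ has order two and fixed subgroup $SO(3)$ of rank one. De Siebenthal's constant-rank theorem (the one behind Lemma \ref{lem4}) says that the stabilizers of the twisted adjoint action all have the same rank as \emph{one another}, not the same rank as $G$, so it cannot deliver your full-rank claim. What you actually need --- and what is the genuine content of the de Siebenthal proposition that the paper's own proof simply cites --- is the weaker but still nontrivial fact that a finite-order automorphism of a nontrivial compact connected \emph{semisimple} group has fixed subgroup of positive rank; applied to the semisimple part of $C_G(S)$ (which $\sigma$ preserves and which meets the central torus $S$ in a finite set), this yields a torus in $G^{\sigma}$ of rank greater than $\dim S$ and hence the desired contradiction. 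As written, your argument rests on a statement that is untrue and attributes it to a theorem that does not assert it.

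In part (b) you prove a different statement from the one in the lemma. The lemma asserts that $\sigma$ \emph{itself} preserves a Weyl chamber of the specific torus $T = C(T^{\sigma})$; you show only that some $W$-modification of $\sigma|_T$ does, and the modification is garbled: simple transitivity of $W$ on chambers produces $w$ with $w\circ\sigma|_T$ chamber-preserving, not $w\sigma w^{-1}$, and $w\circ\sigma|_T$ is neither conjugate to $\sigma|_T$ nor of the same order in general. Nor can the conjugation route be repaired, since not every root-system automorphism is conjugate to a chamber-preserving one: $-1$ on $A_2$ is central in the automorphism group of the root system and preserves no chamber. Indeed, for conjugation on $SU(3)$ the restriction of $\sigma$ to the \emph{diagonal} maximal torus is $t\mapsto t^{-1}$ and preserves no chamber --- the point is precisely that the diagonal torus is not $C(T^{\sigma})$. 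The correct argument uses part (a): since $C(T^{\sigma})=T$, no root of $(G,T)$ vanishes on $\lie{t}^{\sigma}$, so $\lie{t}^{\sigma}$ contains a regular element $X$; as $\sigma$ fixes $X$, it preserves the unique chamber containing $X$. Once chamber-preservation is established, your deduction that the order of $\sigma|_T$ equals that of $[\sigma]$ is essentially the paper's own addition to the cited result and is fine.
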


\begin{proof}
This is mostly just a restatement [\cite{des}, chapter II, \S 3 Proposition 2].  The only addition is that $\sigma_T$ has order equal to $[\sigma]$.  This follows because if $[\sigma]$ has order $n$, then $\sigma^n$ is an inner automorphism that preserves a Weyl chamber of $T$ and thus must restrict to the identity map on $T$.
\end{proof}

The twisted adjoint action restricts to the standard adjoint action for the subgroup $ N_{G_0^{\sigma}}(T^{\sigma}) \subseteq N_G(T^{\sigma}) $ acting on the subspace $T^{\sigma} \subseteq T$, so these inclusions give rise to morphism in equivariant cohomology, which by Proposition \ref{prop1} fits into a commutative diagram

\begin{equation}\begin{CD}\label{thefirst}
\xymatrix{  H^*(BL_{\sigma}G) \ar[d]^{\cong} \ar[r] & H^*(BLG_0^{\sigma}) \ar[d]^{\cong}   \\
       H^*_{N_G(T^{\sigma})}(T_{Ad_{\sigma}})\ar[r] &  H^*_{N_{G_0^{\sigma}}(T^{\sigma})}(T^{\sigma}_{Ad})}
\end{CD}\end{equation}
where the top arrow is the subject of Theorem \ref{thm0}.

Since $C(T^{\sigma}) \le N_G( T^{\sigma})  \le N_G(C(T^{\sigma}))$, it follows from Lemma \ref{a torus} that $$T \le N_G(T^{\sigma}) \le N_G(T).$$
Define $ W_{\sigma} := N_G(T^{\sigma})/T$. Then $W_{\sigma} \subseteq W_G = N_G(T)/T$ is a finite group of order coprime to $p$. By Theorem \ref{cov}, there is a natural isomorphism 
$$H^*_{N_G(T^{\sigma})}(T_{Ad_{\sigma}}) \cong H^*_T(T_{Ad_{\sigma}})^{W_{\sigma}}.$$

The Weyl group $W_{G_0^{\sigma}} := N_{G_0^{\sigma}}(T^{\sigma})/T^{\sigma}$ acts faithfully on $T^{\sigma}$ by the adjoint action, so the inclusion $ N_{G_0^{\sigma}}(T^{\sigma}) \hookrightarrow N_{G}(T^{\sigma})$ descends to an injection $W_{G_0^{\sigma}} \hookrightarrow W_{\sigma}$. It follows from Lagrange's Theorem that the order of $W_{G_0^{\sigma}}$ is coprime to $p$. We gain a natural isomorphism
$$ H^*_{N_{G_0^{\sigma}}(T^{\sigma})}(T^{\sigma}_{Ad}) \cong H^*_{T^{\sigma}}(T^{\sigma}_{Ad})^{W_{G_0^{\sigma}}} . $$ 

These natural isomorphisms permit us to replace (\ref{thefirst}) with the commuting diagram

\begin{equation}\begin{CD}\label{commagain}
\xymatrix{ H^*(BL_{\sigma}G) \ar[r] \ar[d]^{\cong} &  H^*(BLG_0^{\sigma})  \ar[d]^{\cong}\\
H^*_T( T_{Ad_{\sigma}})^{W_{\sigma}} \ar[r] & H^*_{T^{\sigma}}(T^{\sigma}_{Ad})^{W_{G_0^{\sigma}}}}
\end{CD}\end{equation}

\begin{lem}\label{alema}
The inclusion $T^{\sigma} \subseteq T$ induces an isomorphism in equivariant cohomology
$$ H^*_T( T_{Ad_{\sigma}}) \cong H^*_{T^{\sigma}}(T^{\sigma}_{Ad}) .$$
\end{lem}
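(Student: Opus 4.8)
The plan is to analyze the structure of the $T$-space $T_{Ad_\sigma}$ and show that it deformation retracts equivariantly (in a sense suitable for Borel cohomology) onto the $T^\sigma$-space $T^\sigma_{Ad}$, or more precisely to identify the Borel quotients directly. The twisted adjoint action of $T$ on $T_{Ad_\sigma}$ is $Ad_\sigma(t)(x) = t x \sigma(t)^{-1} = x \cdot t \sigma(t)^{-1}$ since $T$ is abelian; so the orbits are the cosets of the subgroup $K := \{ t \sigma(t)^{-1} \mid t \in T\} = \mathrm{im}(1-\sigma_T \colon T \to T)$, and the stabilizer of every point is the same subgroup $T^\sigma = \ker(1-\sigma_T) = G_0^\sigma \cap T$ (using Lemma \ref{a torus}(a),(b), which tells us $\sigma_T$ has finite order $n$ coprime to $p$ and $T^\sigma$ is a subtorus). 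Thus $T_{Ad_\sigma}$ is a disjoint union of copies of $K \cong T/T^\sigma$, each with trivial residual $T^\sigma$-action, and the component set is $\pi_0(T_{Ad_\sigma}) \cong T/(T^\sigma \cdot K)$, a finite abelian group. First I would record these identifications cleanly.

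Next, the key computation: for the Borel construction, $H^*_T(T_{Ad_\sigma}) \cong H^*_{T^\sigma}(\text{(a single orbit representative)})$ by the standard fact that for a $T$-space whose orbits are all $T/T^\sigma$, the Borel quotient fibers over $BT$ with fiber... — more carefully, writing $T_{Ad_\sigma} = T \times_{T^\sigma} F$ where $F$ is the fixed-point set $T_{Ad_\sigma}^{T^\sigma}$ (with trivial $T^\sigma$-action), we get $H^*_T(T_{Ad_\sigma}) = H^*_T(T\times_{T^\sigma} F) \cong H^*_{T^\sigma}(F)$. So the lemma reduces to identifying $F = T_{Ad_\sigma}^{T^\sigma}$ with $T^\sigma_{Ad} = T^\sigma$ (with its own trivial action, since $T^\sigma$ is abelian) in a way compatible with the equivariant structures — i.e., showing the inclusion $T^\sigma \hookrightarrow T_{Ad_\sigma}$ as the subspace $\{1\}$-component meets each component of $T_{Ad_\sigma}^{T^\sigma}$ exactly once, hence is a homotopy equivalence onto $F$. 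Since $\sigma$ restricts to the identity on $T^\sigma$, indeed $T^\sigma \subseteq T_{Ad_\sigma}^{T^\sigma}$, and one checks $T_{Ad_\sigma}^{T^\sigma} = \{ x \in T \mid x t x^{-1} = t \ \forall t \in T^\sigma\} = T$ set-theoretically — wait, that is all of $T$. The correct point is that $T_{Ad_\sigma}^{T^\sigma}$ as a $T^\sigma$-space (trivial action) is just $T$ itself, but we must compare it as a $T$-space minus the already-quotiented directions; the honest statement is $H^*_T(T_{Ad_\sigma}) \cong H^*_{T/K}(T/K)$ via pushing forward along the free $K$-action — here $T/K$ acts on $T/K$ by translation, which is free, so this is $H^*(pt)$... but that contradicts a nontrivial answer, so the free directions must be handled via $H^*_T$, not $H^*_{T/K}$. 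Let me restate: $H^*_T(T_{Ad_\sigma})$ — since the $K$-action is free with quotient $\pi_0 \times (\text{pt})$ along orbits — equals $H^*_{T^\sigma}(\pi_0(T_{Ad_\sigma}))$; and symmetrically $H^*_{T^\sigma}(T^\sigma_{Ad}) = H^*_{T^\sigma}(T^\sigma)$. The map identifies both with $H^*(BT^\sigma) \otimes H^*(\pi_0)$ once we know $T^\sigma \to T_{Ad_\sigma}$ induces a bijection on $\pi_0$: the inclusion $T^\sigma \hookrightarrow T$ composed with $T \twoheadrightarrow T/(T^\sigma K) = \pi_0$ — I would verify this is surjective using that $T = T^\sigma + K$ up to finite index coprime to $p$, which follows from $\sigma_T$ having order $n$ coprime to $p$ so that $1 - \sigma_T$ and its "complement" $1 + \sigma_T + \cdots + \sigma_T^{n-1}$ multiply to something invertible mod $p$ on $H^1(T;F)$.

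The main obstacle I anticipate is precisely this $\pi_0$ bookkeeping: showing the inclusion $T^\sigma_{Ad} \hookrightarrow T_{Ad_\sigma}$ hits every connected component of $T_{Ad_\sigma}$, with the components in bijection, and that the induced map on each component's Borel cohomology is an isomorphism. This is where the coprimality of $p$ to $n = \mathrm{ord}([\sigma])$ (hence to $|\pi_0(T_{Ad_\sigma})|$, a quotient of a group controlled by $\det(1-\sigma_T)$) enters essentially. I would argue at the level of $H^1(T;\Z) = \Z^{\rk G}$ with the $\sigma_T$-action: tensoring with $F$, the operator $1 - \sigma_T$ has image and kernel that split off $F[H^1(T)]$ since $\sigma_T$ is semisimple mod $p$ (its minimal polynomial divides $x^n - 1$ which is separable mod $p$), giving $H^1(T;F) = \ker(1-\sigma_T) \oplus \mathrm{im}(1-\sigma_T)$, i.e. the "$K$-directions" and the "$T^\sigma$-directions" cohomologically decouple. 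From this the isomorphism of the lemma is immediate, as both sides become $H^*(B T^\sigma; F)$ tensored with the exterior/group algebra on the common $\ker(1-\sigma_T)$ part. The remaining care is to phrase the comparison map induced by the honest inclusion of spaces, rather than by an abstract algebra isomorphism, but given the retraction this is routine.
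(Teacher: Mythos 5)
Your proposal has a genuine structural error at its core. The $T$-orbits in $T_{Ad_\sigma}$ are the cosets of $K=\{t\sigma(t)^{-1}\mid t\in T\}$, which is a subtorus of dimension $\dim T-\dim T^\sigma$; hence the orbit space $T/K$ is a \emph{torus of dimension $\dim T^\sigma$}, not a finite set. Your repeated assertions that the ``component set'' of orbits is a finite abelian group, and the resulting claim $H^*_T(T_{Ad_\sigma})\cong H^*_{T^\sigma}(\pi_0(T_{Ad_\sigma}))$, are therefore false whenever $T^\sigma$ is positive-dimensional (the typical case): that formula would produce a ring concentrated in even degrees, whereas the correct answer $H^*(BT^\sigma)\otimes H^*(T^\sigma)$ has a nontrivial exterior factor. (Your first attempted route via $T\times_{T^\sigma}F$ collapses for the reason you yourself notice --- $T^\sigma$ acts trivially, so $F$ is everything --- and your hedge ``exterior/group algebra'' at the end shows the answer was never pinned down.) Two further slips: $\ker(1-\sigma|_T)$ is the possibly disconnected group $T\cap G^\sigma$, not the torus $T^\sigma=G_0^\sigma\cap T$; the finite discrepancy between them is exactly where the hypothesis that $p$ is coprime to $\#\pi_0(G^\sigma)$ must enter, and you never use that hypothesis. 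Also, $(1-\sigma_T)$ and $1+\sigma_T+\cdots+\sigma_T^{n-1}$ multiply to $1-\sigma_T^n=0$, not to anything invertible.

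The paper's proof is the corrected version of what you are reaching for: split $T=T^\sigma\times T'$; the $T^\sigma$-factor acts trivially and contributes $BT^\sigma$; the $T'$-factor acts by translation with \emph{finite} constant stabilizer $T'\cap G^\sigma$, which injects into $\pi_0(G^\sigma)$ and so contributes an acyclic $B(T'\cap G^\sigma)$; this leaves $H^*(BT^\sigma)\otimes H^*(T_{Ad_\sigma}/T)$, and finally Lemma \ref{covemap} identifies $T^\sigma\to T_{Ad_\sigma}/T=T/K$ as a covering of tori of degree dividing $n^{\rk T}$, hence a cohomology isomorphism. The one salvageable part of your sketch is the closing idea that $x^n-1$ is separable mod $p$, so $1-\sigma$ acts semisimply on $H^1(T;F)$ and its kernel and image split; that could be developed into a Serre spectral sequence proof for the fibration $T\to (T_{Ad_\sigma})_{hT}\to BT$ (with $d_2$ induced by $1-\sigma$), but as written you neither set this up nor reconcile it with the incorrect finite-orbit-space picture preceding it.
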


\begin{proof}
The action of $T^{\sigma}$ on $T^{\sigma}_{Ad}$ is trivial, so the homotopy quotient is the product $BT^{\sigma} \times T^{\sigma}$ and the equivariant cohomology ring is $$H^*_{T^{\sigma}}(T^{\sigma}_{Ad}) \cong H^*(BT^{\sigma}) \otimes H^*(T^{\sigma}).$$
The twisted adjoint action of $t \in T$ on $x \in T_{Ad_{\sigma}}$ 
$$ Ad_{\sigma}(t)(x) = t x \sigma(t)^{-1} = t \sigma(t)^{-1} x $$
is simply translation by $t \sigma(t)^{-1}$.  Consequently, $T$ acts on $T_{Ad_{\sigma}}$ with constant stabilizer $G^{\sigma}\cap T$.

Choose a complementary subtorus $T'$ so that  $T = T^{\sigma} \times T'$.  Then the factor $T^{\sigma}$ acts trivially on $T_{Ad_{\sigma}}$, so the homotopy quotient satisfies
$$ (T_{Ad_{\sigma}})_{h T} \cong BT^{\sigma} \times  (T_{Ad_{\sigma}})_{h T'}  $$
where in the second factor we consider the restricted action of $T'$ on $T_{Ad_{\sigma}}$ which has constant stabilizer $T'\cap G^{\sigma}$. It follows that the projection map onto the orbit space
$$ (T_{Ad_{\sigma}})_{h T'} \rightarrow T_{Ad_{\sigma}}/T'= T_{Ad_{\sigma}}/T $$
has homotopy fibre $B(T' \cap G^{\sigma})$.

Observe that the induced homomorphism $ T' \cap G^{\sigma} \rightarrow \pi_0(G^{\sigma})$ is injective, because $T' \cap G^{\sigma}_0 = T' \cap T^{\sigma} = \{Id_G\}$.  In particular, the order of $T' \cap G^{\sigma}$  divides the order of $\pi_0(G^{\sigma})$, so $B(T' \cap G^{\sigma})$ is acyclic over the field $F$ and 
$$ H^*_T(T_{Ad_{\sigma}}) \cong H^*(BT^{\sigma}) \otimes H^*_{T'}(T_{Ad_{\sigma}}) \cong  H^*(BT^{\sigma}) \otimes H^*(T_{Ad_{\sigma}}/T).$$
The result now follows from Lemma \ref{covemap}.

\ignore{it only remains to prove that the map $\phi: T^{\sigma} \rightarrow T_{Ad}/T$. (the composition of inclusion $T^{\sigma} \hookrightarrow T$, with the quotient map) in a cohomology isomorphism. Let $H := \{ t \sigma(t)^{-1}| t \in T\}$. Then $\phi$ can be identified with the corresponding group homomorphism $$\phi': T^{\sigma} \rightarrow T/H.$$  
Since $\phi'$ is a homomorphism between tori of equal rank, it induces a cohomology isomorphism if and only if $\ker(\phi') = T^{\sigma} \cap H$ is finite of order coprime to $char(F)$.

Suppose that  $ t \in H \cap T^{\sigma}$.  Then $t = \sigma(t)$ and  $t = s \sigma(s)^{-1}$ for some $s \in T$. If $\sigma$ has order $n$, then
$$  t^n =  t \sigma(t) \sigma^2(t) ... \sigma^{n-1}(t) =  (s \sigma(s)^{-1})(\sigma(s) \sigma^2(s)^{-1}) ...( \sigma^{n-1}(s) \sigma^n(s)^{-1})  = Id_T.$$
Thus $\ker(\phi')$ is a subgroup of $T_n := \{ t \in T| t^n = Id_T\}$ which is a group of order $n^{\rk(T)}$. Since $n$ is coprime to $char(F)$, the result now follows by Lagrange's Theorem.}
\end{proof}

\begin{lem}\label{covemap}
The map $\phi: T^{\sigma} \rightarrow T_{Ad_{\sigma}}/ T$ obtained by composing inclusion and quotient maps is a covering map of finite degree coprime to $p$. In particular, $\phi$ induces a cohomology isomorphism in characteristic $p$.
\end{lem}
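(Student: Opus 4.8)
The plan is to identify $\phi$ with a homomorphism of tori and check that its kernel is finite of order coprime to $p$. First I would observe that the $T$-action on $T_{Ad_{\sigma}}$ is, as computed in the proof of Lemma \ref{alema}, translation by $t \mapsto t\sigma(t)^{-1}$; hence the orbit space is the quotient group $T/H$ where $H := \{ t\sigma(t)^{-1} \mid t \in T\}$, which is a subtorus of $T$ since it is the image of the continuous homomorphism $t \mapsto t\sigma(t)^{-1}$ from the connected group $T$. Under this identification, $\phi$ becomes the composite group homomorphism $\phi' : T^{\sigma} \hookrightarrow T \twoheadrightarrow T/H$.

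Next I would check that $T^{\sigma}$ and $T/H$ have the same dimension, so that $\phi'$ — once shown to have finite kernel — is automatically a surjective covering map of finite degree. Since $\sigma_T$ has finite order $n$ (Lemma \ref{a torus}(b)), averaging gives $\mathfrak{t} = \mathfrak{t}^{\sigma} \oplus \mathrm{im}(1 - d\sigma)$ as $\sigma$-modules, and $\mathrm{Lie}(H) = \mathrm{im}(1 - d\sigma)$ while $\mathrm{Lie}(T^{\sigma}) = \mathfrak{t}^{\sigma}$; hence $\dim T^{\sigma} = \dim \mathfrak t - \dim H = \dim(T/H)$, and $\mathfrak t^\sigma \cap \mathrm{Lie}(H) = 0$, so $T^\sigma \cap H$ is finite.

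It then remains to bound the order of $\ker(\phi') = T^{\sigma} \cap H$. This is the computation already recorded (in the \verb|\ignore|d block) in the proof of Lemma \ref{alema}: if $t \in T^{\sigma}\cap H$, write $t = s\sigma(s)^{-1}$ for some $s \in T$; then since $\sigma(t) = t$,
$$ t^n = t\,\sigma(t)\,\sigma^2(t)\cdots\sigma^{n-1}(t) = \bigl(s\sigma(s)^{-1}\bigr)\bigl(\sigma(s)\sigma^2(s)^{-1}\bigr)\cdots\bigl(\sigma^{n-1}(s)\sigma^n(s)^{-1}\bigr) = \mathrm{Id}_T, $$
using $\sigma^n(s) = s$. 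Thus $\ker(\phi')$ is a subgroup of the $n$-torsion subgroup $T_n$, which has order $n^{\mathrm{rank}(T)}$. Since $n$ is the order of $[\sigma]$, which by hypothesis is coprime to $p$, Lagrange's Theorem gives $p \nmid \#\ker(\phi')$, so $\phi$ is a finite covering of degree coprime to $p$. Finally, a connected finite covering $\phi$ of degree $d$ coprime to $p$ induces an isomorphism on $H^*(-;F)$: the composite $H^*(T/H) \to H^*(T^\sigma) \to H^*(T/H)$ is multiplication by $d$ by the transfer, and $\phi^*$ is injective again by transfer, so $\phi^*$ is an isomorphism by a dimension count (both sides are exterior algebras on $\dim T^\sigma$ generators over $F$).

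The only genuinely delicate point is the claim that $\dim T^{\sigma} = \dim (T/H)$, i.e.\ that $\phi'$ is a surjection and not merely an isogeny onto a proper subtorus; this is where the finiteness of the order of $\sigma_T$ is essential, and I would make sure to cite Lemma \ref{a torus}(b) there. Everything else is the routine torus/transfer argument sketched above.
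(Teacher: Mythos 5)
Your proof is correct and takes essentially the same route as the paper's: identify $\phi$ with the group homomorphism $\phi'\co T^{\sigma}\to T/H$, $H=\{t\sigma(t)^{-1}\mid t\in T\}$, and show $\ker(\phi')=T^{\sigma}\cap H$ lies in the $n$-torsion subgroup $T_n$, whence the degree divides $n^{\rk(T)}$ and is coprime to $p$. The only difference is that you explicitly verify the equal-rank claim $\dim T^{\sigma}=\dim(T/H)$ via the averaging decomposition $\lie{t}=\lie{t}^{\sigma}\oplus\mathrm{im}(1-d\sigma)$, a point the paper asserts without proof; that is a worthwhile addition.
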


\begin{proof}
Let $n$ be the order of $\sigma|_T$. By Lemma \ref{a torus} $n$ is coprime to $p$.	
	
As explained in the proof of Lemma \ref{alema}, the twisted adjoint action of $t \in T$ on $T_{Ad_{\sigma}}$ 
is simply translation by $t \sigma(t)^{-1}$.  The orbit space $T_{Ad_{\sigma}}/T$ may thus be identified with the coset space $T/H$ where $H := \{ t \sigma(t)^{-1}| t \in T\}$, and $\phi$ can be identified with the corresponding group homomorphism $$\phi': T^{\sigma} \rightarrow T/H.$$   Since $\phi'$ is a homomorphism between tori of equal rank, it is enough to show that  $\ker(\phi') = T^{\sigma} \cap H$ has finite order dividing a power of $n$.

Suppose that  $ t \in H \cap T^{\sigma}$.  Then both $t = \sigma(t)$ and  $t = s \sigma(s)^{-1}$ for some $s \in T$. Thus
$$  t^n =  t \sigma(t) \sigma^2(t) ... \sigma^{n-1}(t) =  (s \sigma(s)^{-1}) \dots( \sigma^{n-1}(s) \sigma^n(s)^{-1})  = s \sigma^n(s)^{-1} = Id_T$$
so $\ker(\phi')$ is a subgroup of $T_n := \{ t \in T| t^n = Id_T\}$ which is a group of order $n^{\rk(T)}$. The result follows by Lagrange's Theorem. 
\end{proof}

Next, we want to understand the $W_{\sigma}$ and $W_{G_0^{\sigma}}$ actions.  Observe that the residual $W_{G_0^{\sigma}}$-action on the homotopy quotient $(T_{Ad})_{hT^{\sigma}} = BT^{\sigma} \times T^{\sigma}$ acts diagonally in the standard way on each factor, so the action extends to $W_{\sigma} = N_G(T^{\sigma})/T$ in the standard way.

\begin{lem}\label{gpat}
The isomorphism $ H^*_{T^{\sigma}}(T^{\sigma}_{Ad}) \cong H^*_T( T_{Ad_{\sigma}})$ defined in Lemma \ref{alema} is $W_{\sigma}$-equivariant with respect to the actions defined above. 
\end{lem}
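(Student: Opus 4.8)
The plan is to trace through the construction in Lemma~\ref{alema} and check that each identification commutes with the natural $W_\sigma$-action, where $W_\sigma = N_G(T^\sigma)/T$ acts on both sides by conjugation (via the residual action on the relevant homotopy quotients). The key point is that every map appearing in the proof of Lemma~\ref{alema}—the splitting $T = T^\sigma \times T'$, the projection $(T_{Ad_\sigma})_{hT'} \to T_{Ad_\sigma}/T$, and the covering map $\phi\co T^\sigma \to T_{Ad_\sigma}/T$ of Lemma~\ref{covemap}—arises from natural constructions on $T$, $T^\sigma$, and the $T$-action on $T_{Ad_\sigma}$, all of which are preserved up to the needed coherence by conjugation by elements of $N_G(T^\sigma)$. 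So the bulk of the argument is bookkeeping about equivariance of natural maps.

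First I would fix the precise meaning of the $W_\sigma$-actions on the two sides. On $H^*_{T^\sigma}(T^\sigma_{Ad}) \cong H^*(BT^\sigma)\otimes H^*(T^\sigma)$ the action is the one inherited from $W_{G_0^\sigma} \subseteq W_\sigma$ acting on $T^\sigma$ by conjugation (extended to all of $W_\sigma$ as noted in the paragraph preceding the lemma, since an element of $N_G(T^\sigma)$ normalizes $T^\sigma$ and hence acts on $BT^\sigma \times T^\sigma$ diagonally). On $H^*_T(T_{Ad_\sigma})$ the action comes from $W_\sigma = N_G(T^\sigma)/T$: an element $n \in N_G(T^\sigma)$ conjugates $T$ to $nTn^{-1}$, but since $T = C_G(T^\sigma)$ and $n$ normalizes $T^\sigma$, in fact $n$ normalizes $T$, so conjugation by $n$ gives a self-map of the pair $(T, T_{Ad_\sigma})$—here one must check the twisted adjoint action is carried to itself, i.e. that $n\,Ad_\sigma(t)(x)\,n^{-1} = Ad_\sigma(ntn^{-1})(nxn^{-1})$ using $\sigma(n) = n$ when $n$ is chosen in $G^\sigma$; more carefully, since $\sigma|_T$ commutes with conjugation by $n$ up to the discrepancy measured in $\pi_0(G^\sigma)$, one reduces to representatives fixed by $\sigma$, which is possible because $W_\sigma$ is generated by such after inverting $|\pi_0(G^\sigma)|$. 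This compatibility is the one subtle point.

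Then I would go through the chain of isomorphisms in Lemma~\ref{alema} one link at a time. The decomposition $(T_{Ad_\sigma})_{hT} \cong BT^\sigma \times (T_{Ad_\sigma})_{hT'}$ uses a choice of complement $T'$; since $W_\sigma$ need not preserve $T'$, I would instead argue more invariantly: $H^*_T(T_{Ad_\sigma}) \cong H^*(BT^\sigma) \otimes H^*_{T'}(T_{Ad_\sigma})$ as a consequence of the fact that $T^\sigma$ acts trivially and $H^*(B(T'\cap G^\sigma);F)$ is trivial, and the splitting off of $H^*(BT^\sigma)$ is canonical (it is the image of $H^*_T(\mathrm{pt})$ restricted along $T^\sigma \hookrightarrow T$, which is $W_\sigma$-natural). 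The remaining factor $H^*_{T'}(T_{Ad_\sigma}) \cong H^*(T_{Ad_\sigma}/T)$ and the isomorphism $H^*(T_{Ad_\sigma}/T) \cong H^*(T^\sigma)$ from the covering map $\phi$ of Lemma~\ref{covemap} are both induced by maps of spaces that are natural under conjugation by $N_G(T^\sigma)$: the orbit space $T_{Ad_\sigma}/T \cong T/H$ with $H = \{t\sigma(t)^{-1}\}$ is carried to itself by conjugation (since $n$ normalizes $T$ and commutes with $\sigma$ on a suitable representative), and likewise $\phi\co T^\sigma \to T/H$ is equivariant. I expect the main obstacle to be exactly the coherence issue flagged above: conjugation by a general element of $N_G(T^\sigma)$ need not literally commute with $\sigma$, only up to an inner automorphism, so one must either work with $\sigma$-fixed representatives (available after inverting $|\pi_0(G^\sigma)|$, which is a standing hypothesis) or check that the discrepancy acts trivially on cohomology. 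Once that is handled, the equivariance of each individual link assembles into the statement.
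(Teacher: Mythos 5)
Your outline of the reduction is fine as far as it goes: the $H^*(BT^\sigma)$ factor splits off canonically and $W_\sigma$-naturally, so the lemma does come down to the equivariance of the isomorphism $H^*(T^\sigma)\cong H^*(T_{Ad_\sigma}/T)$ induced by the covering map $\phi$ of Lemma \ref{covemap}. But that last point is the entire content of the lemma, and your proposal defers it rather than proving it. Worse, the fix you lean on --- reducing to representatives of $W_\sigma$ fixed by $\sigma$ --- would fail: classes in $W_\sigma$ outside $W_{G_0^\sigma}$ need not admit any representative in $G^\sigma$, and these are exactly the classes that make Theorem \ref{thm0} nontrivial. For instance, for $SU(n)$ with $\sigma$ equal to complex conjugation, the class of $W_\sigma$ inducing the orientation-reversing outer automorphism of $SO(n)$ is represented by $iP$ with $P\in O(n)\setminus SO(n)$, and $\sigma(iP)=-iP\neq iP$; there is no $\sigma$-fixed representative, and inverting $\#\pi_0(G^\sigma)$ does not produce one.

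The paper closes the gap with a direct computation that requires no choice of good representative. For $n\in N_G(T^\sigma)$ and $x\in T^\sigma$ one has $\phi(Ad(n)(x))=[nxn^{-1}]$, while $Ad_\sigma(n)(\phi(x))=[nx\sigma(n^{-1})]=[nxn^{-1}\cdot n\sigma(n^{-1})]$. The discrepancy $n\sigma(n^{-1})$ centralizes $T^\sigma$ (since $\sigma$ fixes $T^\sigma$ pointwise and $n$ normalizes it, $\sigma(n^{-1}tn)=n^{-1}tn$ gives $n\sigma(n^{-1})t=tn\sigma(n^{-1})$ for all $t\in T^\sigma$), hence lies in $C(T^\sigma)=T$ by Lemma \ref{a torus}. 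So $\phi\circ Ad(n)$ and $Ad_\sigma(n)\circ\phi$ differ by a translation of the torus $T/H$, which is homotopic to the identity, and therefore induce the same map on cohomology. This is the one idea your proposal is missing; the "coherence issue" you flag is resolved not by choosing representatives but by observing that the failure of $n$ to commute with $\sigma$ lands in $T$ and acts by a translation.
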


\begin{proof}

Both actions are diagonal with respect to the Kunneth factorizations defined in the proof of Lemma \ref{alema}:
\begin{eqnarray*} H_{T^{\sigma}}^*(T^{\sigma}_{Ad}) & \cong & H^*(BT^{\sigma})\otimes H^*(T^{\sigma})\\
H^*_T( T_{Ad_{\sigma}}) &\cong & H^*(BT^{\sigma})\otimes H^*(T_{Ad_{\sigma}}/T).
\end{eqnarray*}

The action on the first factors are the same, since the group $T^{\sigma}$ acts trivially in both cases. It remains to consider action on the second factors are equivariant with respect to the the isomorphism $H^*(T^{\sigma}) \cong H^*(T_{Ad_{\sigma}}/T)$ from Lemma \ref{covemap}.

Let $\phi: T^{\sigma}_{Ad} \rightarrow T_{Ad_{\sigma}}/T$ be the covering map,  $n \in N_G(T^{\sigma})$, and $x \in T^{\sigma}_{Ad}$. Then

$$ \phi \circ Ad(n)(x) = \phi(nxn^{-1}) = [n x n^{-1}] $$
while
$$Ad_{\sigma}(n) \circ \phi(x) = [ n x \sigma(n^{-1})]= [ n x n^{-1} n \sigma(n^{-1})] . $$
The two maps $\phi \circ Ad(n)$ and $Ad_{\sigma}(n) \circ \phi$ agree up to translation by $n \sigma(n^{-1}) \in T$, so they are homotopic and define the same map on cohomology.
\end{proof}

\begin{proof}[Proof of Theorem \ref{thm0}]

Consider again the diagram (\ref{commagain}). It follows from Lemma \ref{alema} that the horizontal arrows are injective, and from Lemma \ref{gpat} that the image of the bottom arrow is equal to  

$$  H^*_{T^{\sigma}}(T^{\sigma}_{Ad})^{W_{\sigma}} \subseteq H^*_{T^{\sigma}}(T^{\sigma}_{Ad})^{W_{G_0^{\sigma}}}$$

\end{proof}

\section{Simplifying the calculations}\label{Simplifying}

It can be tricky to apply Theorem \ref{thm0} directly, because it requires an explicit understanding of $W_{\sigma}$ and its action on $H^*(BLG_0^{\sigma}) $. Fortunately, matters simplify under certain conditions.  

Throughout this section let $G$ be a compact, connected Lie group, let $\sigma \in Aut(G)$ be an automorphism, and let $T \leq G$ be a maximal torus containing a maximal torus $ T^{\sigma} \leq G^{\sigma}_0$. There is a natural action of $Aut(G_0^{\sigma})$ on $BLG_0^{\sigma}$. By a result of Segal (\cite{s} section 3), inner automophisms act by isotopy, so we obtain a natural action of $Out(G_0^{\sigma})$ on $H^*(BLG_0^{\sigma})$.

\begin{prop}\label{blah0}
Suppose that the adjoint action of $W_{\sigma}$ on $T^{\sigma}$ consists of transformations that extend to automorphisms of $G_0^{\sigma}$. Then the $W_{\sigma}$-action on $H^*(BLG_0^{\sigma})$ factors through the $Out(G_0^{\sigma})$-action.
\end{prop}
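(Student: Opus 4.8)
The plan is to unwind the three group actions on $H^*(BLG_0^{\sigma})$ in play and check they are compatible. There is the $W_\sigma$-action, which by definition of $W_\sigma = N_G(T^\sigma)/T$ arises from conjugation by elements of $N_G(T^\sigma)$; there is the $W_{G_0^\sigma}$-action, which by Lemma \ref{gpat} and the proof of Theorem \ref{thm0} is the restriction of the $W_\sigma$-action; and there is the $\mathrm{Out}(G_0^\sigma)$-action coming from Segal's theorem, which factors the natural $\mathrm{Aut}(G_0^\sigma)$-action on $BLG_0^\sigma$ through inner automorphisms acting trivially on cohomology. The key observation is that an element $w \in W_\sigma$, by the hypothesis, acts on $T^\sigma$ by a transformation that is the restriction of some $\psi_w \in \mathrm{Aut}(G_0^\sigma)$; I want to show the $w$-action on $H^*(BLG_0^\sigma)$ agrees with the $[\psi_w]$-action.

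First I would fix $n \in N_G(T^\sigma)$ representing $w$ and consider the conjugation map $\mathrm{Ad}(n): C_G(T^\sigma) \to C_G(T^\sigma)$; by Lemma \ref{a torus}(a) this is $C_G(T^\sigma) = T$, so conjugation by $n$ restricts to an automorphism of $T^\sigma$ and, via the identifications of diagram (\ref{commagain}), this is exactly how $w$ acts on $H^*_{T^\sigma}(T^\sigma_{Ad}) \cong H^*(BLG_0^\sigma)$. Next I would pick $\psi_w \in \mathrm{Aut}(G_0^\sigma)$ extending this transformation on $T^\sigma$; then both $\mathrm{Ad}(n)|_{T^\sigma}$ and $\psi_w|_{T^\sigma}$ induce the same map on $H^*_{T^\sigma}(T^\sigma_{Ad}) = H^*(BT^\sigma) \otimes H^*(T^\sigma)$, since this equivariant cohomology depends only on the action on $T^\sigma$ (the action being trivial on $T^\sigma$, one only sees the torus automorphism on both Künneth factors). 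The $\psi_w$-action on this group, on the other hand, is by definition the restriction along $T^\sigma \hookrightarrow G_0^\sigma$ of the $\psi_w$-action on $H^*(BLG_0^\sigma)$, which by the injectivity of $H^*(BLG_0^\sigma) \hookrightarrow H^*_{T^\sigma}(T^\sigma_{Ad})$ (from Lemma \ref{alema}/diagram (\ref{commagain})) determines it uniquely. Hence the $w$-action and the $\psi_w$-action on $H^*(BLG_0^\sigma)$ coincide, and by Segal's theorem $\psi_w$ acts through its class $[\psi_w] \in \mathrm{Out}(G_0^\sigma)$.

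The final step is to package this into a genuine factorization: I would verify that $w \mapsto [\psi_w]$ is a well-defined group homomorphism $W_\sigma \to \mathrm{Out}(G_0^\sigma)$ — well-definedness because two extensions $\psi_w, \psi_w'$ of the same map on $T^\sigma$ differ by an automorphism fixing $T^\sigma$ pointwise, hence (combining with the argument above) act the same on $H^*(BLG_0^\sigma)$, and one checks directly it is independent of the chosen representative $n$ since changing $n$ by an element of $T$ changes $\mathrm{Ad}(n)$ by an inner automorphism of $G_0^\sigma$ (note $T^\sigma \leq T$ centralizes... more carefully, $t \in T$ conjugates $G_0^\sigma$ by an automorphism that is trivial on $T^\sigma$, hence inner up to the previous remark) — and homomorphism property from functoriality. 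Then the $W_\sigma$-action on $H^*(BLG_0^\sigma)$ is the composite of this homomorphism with the $\mathrm{Out}(G_0^\sigma)$-action, which is precisely the assertion.

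I expect the main obstacle to be the bookkeeping in the previous paragraph: carefully justifying that an automorphism of $G_0^\sigma$ restricting to the identity on $T^\sigma$ is inner, or at least acts trivially on $H^*(BLG_0^\sigma)$. The cleanest route is probably not to claim it is inner outright but to observe that such an automorphism fixes a maximal torus $T^\sigma$ pointwise, hence preserves a Weyl chamber and so lies in the subgroup of automorphisms generated by inner ones together with "diagram" automorphisms fixing the chamber — but since it fixes $T^\sigma$ pointwise it fixes the simple roots and coroots, so it is inner by the isomorphism theorem for reductive groups. Alternatively, one sidesteps this entirely by working at the level of $H^*_{T^\sigma}(T^\sigma_{Ad})$ throughout and only invoking Segal's theorem at the very end, which is the approach sketched above and which I would favor for brevity.
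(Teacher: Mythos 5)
Your proposal is correct and follows essentially the same route as the paper: both rest on the equivariance of the injection $H^*(BLG_0^{\sigma}) \hookrightarrow H^*_{T^{\sigma}}(T^{\sigma}_{Ad})$ with respect to automorphisms of $G_0^{\sigma}$ preserving $T^{\sigma}$, the hypothesis that each $W_{\sigma}$-transformation of $T^{\sigma}$ extends, and a uniqueness step making $w \mapsto [\psi_w]$ well defined. The paper compresses that last step into the single assertion that an automorphism of a maximal torus extends to at most one outer automorphism of the group, which is exactly the point you flag and resolve (both group-theoretically and by your cohomological sidestep).
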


\begin{proof}
The injection $H^*(BLG_0^{\sigma}) \hookrightarrow H^*_{T^{\sigma}}(T^{\sigma})$ in (\ref{commagain}) is equivariant with respect to automorphisms of $G_0^{\sigma}$ which preserve $T^{\sigma}$. Thus if every transformation of $T^{\sigma}$ extends, it follows that $$ H^*(BLG_0^{\sigma})^{W_{\sigma}} = H^*(BLG_0^{\sigma})^{\Gamma}$$
for some subset $\Gamma \subseteq Out(G_0^{\sigma})$. An automorphism of a maximal torus extends to at most one outer automorphism of its connected, compact Lie group, so $\Gamma$ is the image of a well-defined homomorphism $W_{\sigma} \rightarrow Out(G_0^{\sigma})$.
\end{proof}

\begin{rmk}
The $W_{\sigma}$-action on $T^{\sigma}$ doesn't always extend to automorphisms of $G_0^{\sigma}$. For example, let $G= SU(3)$ and $\sigma = Ad_{g}$ where $$ g =  \left( \begin{array}{ccc}
1 & 0 & 0 \\
0 & -1 & 0 \\
0 & 0 & -1 \end{array} \right) .$$ Then  $T^{\sigma} = T$, so $W_{\sigma} = W \cong S_3$, but the $W$-action does not extend to $G^{\sigma} \cong U(2)$ because it does not preserve its root system.
\end{rmk}

We can use root systems to check whether the $W_{\sigma}$-action extends to $G_0^{\sigma}$. Let $\lie{g},$ $\lie{g}^{\sigma}$, $\lie{t}$, and $\lie{t}^{\sigma}$ denote the complexified Lie algebras of $G$, $G_0^{\sigma}$, $T$, and $T^{\sigma}$ respectively. The root system $\Phi \subset \lie{t}^*$ is simply the set of weights of the $\lie{t}$-module $\lie{g}/\lie{t}$ under the adjoint action. Similarly, the root system of $\Phi_{\sigma} \subset (\lie{t}^{\sigma})^*$ is the set of weights of the $\lie{t}^{\sigma}$-module $\lie{g}^{\sigma}/ \lie{t}^{\sigma}$. The inclusion $ \lie{t}^{\sigma} \hookrightarrow \lie{t}$ determines a projection map $\pi: \lie{t}^* \rightarrow (\lie{t}^{\sigma})^*$. The natural injection of $\lie{t}^{\sigma}$-modules, $\lie{g}^{\sigma}/ \lie{t}^{\sigma} \subseteq \lie{g}/\lie{t}$, implies that $ \Phi_{\sigma} \subseteq \pi(\Phi)$.

\begin{cor}\label{blah}
If the automorphism groups of $\pi(\Phi)$ and $\Phi_{\sigma}$ coincide, then $W_{\sigma}$ acts on $H^*(BLG_0^{\sigma})$ via $Out(G_0^{\sigma})$. 
\end{cor}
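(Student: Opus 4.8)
The plan is to reduce the statement to Proposition \ref{blah0}: it suffices to show that every element of $W_{\sigma}$, acting on $T^{\sigma}$ by the (here ordinary) adjoint action, extends to an automorphism of the compact connected Lie group $G_0^{\sigma}$; the corollary is then immediate.

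First I would set up the action of $W_{\sigma}$ on root data. As recalled in the proof of Theorem \ref{thm0} (via Lemma \ref{a torus}), we have $T \le N_G(T^{\sigma}) \le N_G(T)$, so conjugation realizes $W_{\sigma} = N_G(T^{\sigma})/T$ as a subgroup of $W_G = N_G(T)/T$. Hence $W_{\sigma}$ acts linearly on $\lie{t}$ preserving both the integral lattice $\ker(\exp|_{\lie{t}})$ and the root system $\Phi \subseteq \lie{t}^{*}$. Each $w \in W_{\sigma}$ moreover preserves $\lie{t}^{\sigma}$ and the subgroup $T^{\sigma}$, so it preserves $\ker(\exp|_{\lie{t}^{\sigma}})$, and its action on $\lie{t}^{\sigma}$ is simply the restriction of its action on $\lie{t}$. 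Since the projection $\pi \colon \lie{t}^{*} \to (\lie{t}^{\sigma})^{*}$ is dual to the inclusion $\lie{t}^{\sigma} \hookrightarrow \lie{t}$, it is $W_{\sigma}$-equivariant, and therefore $W_{\sigma}$ preserves $\pi(\Phi) \subseteq (\lie{t}^{\sigma})^{*}$.

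Now I would invoke the hypothesis. The resulting homomorphism $W_{\sigma} \to \mathrm{Aut}(\pi(\Phi))$ lands, by assumption, in $\mathrm{Aut}(\Phi_{\sigma})$ (read as an equality of subgroups of $GL((\lie{t}^{\sigma})^{*})$; the containment $\Phi_{\sigma} \subseteq \pi(\Phi)$ noted before the corollary makes the ambient spaces match). Thus each $w \in W_{\sigma}$ preserves the root system $\Phi_{\sigma}$ of $\lie{g}^{\sigma}$; dualizing, it acts on $\lie{t}^{\sigma}$ preserving the coroots of $G_0^{\sigma}$ and, as already observed, the integral lattice of $T^{\sigma}$. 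This is precisely an automorphism of the root datum of $(G_0^{\sigma}, T^{\sigma})$, which by the isomorphism theorem for compact connected Lie groups is induced by an automorphism of $G_0^{\sigma}$ fixing $T^{\sigma}$ setwise and restricting to the given map on $T^{\sigma}$. Proposition \ref{blah0} then applies and gives the conclusion.

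I expect the only real subtlety to be the passage from ``linear map of $\lie{t}^{\sigma}$'' to ``automorphism of the group $G_0^{\sigma}$'': preserving the restricted weights $\pi(\Phi)$ does not a priori control $G_0^{\sigma}$, and it is exactly the coincidence $\mathrm{Aut}(\pi(\Phi)) = \mathrm{Aut}(\Phi_{\sigma})$ that upgrades this to preservation of the genuine root system of $\lie{g}^{\sigma}$; the lattice condition needed to descend from the Lie algebra to the group comes for free because $W_{\sigma}$ acts by conjugation, which is a group automorphism of $T^{\sigma}$. A secondary point worth noting is that an element of $W_{\sigma}$ need not preserve a base of $\Phi_{\sigma}$, but this is harmless, since every automorphism of a (co)root datum --- not merely those preserving a base --- is realized by a Lie group automorphism.
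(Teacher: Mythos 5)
Your proposal is correct and follows essentially the same route as the paper: show $W_{\sigma}$ preserves $\Phi$, use the $W_{\sigma}$-equivariance of $\pi$ to get preservation of $\pi(\Phi)$, invoke the hypothesis to conclude preservation of $\Phi_{\sigma}$, and then realize the resulting root-datum automorphisms by automorphisms of $G_0^{\sigma}$ so that Proposition \ref{blah0} applies. You simply spell out in more detail the final step that the paper compresses into the remark that compact Lie groups are constructed functorially from their root data.
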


\begin{proof}
Since compact Lie groups can be constructed functorially from their root system and weight lattice, it will suffice to prove that the action of $W_{\sigma}$ on $T^{\sigma}$ preserves the root system $\Phi_{\sigma}$.

The action of $W_{\sigma}$ on $\lie{t}$ is a restriction of the standard Weyl group action, so it clearly preserves the root system $\Phi \subset \lie{t}^*$. The projection map $\pi: \lie{t}^* \rightarrow (\lie{t}^{\sigma})^*$ is $W_{\sigma}$-equivariant, so $W_{\sigma}$ also preserves $\pi(\Phi)$. Since by hypothesis, the automorphisms of $\pi(\Phi)$ and $\Phi_{\sigma}$ coincide, $W_{\sigma}$ must also preserve $\Phi_{\sigma}$.
\end{proof}

A sufficient condition for the hypothesis of Corollary \ref{blah} to hold is that $\pi(\Phi) = \Phi_{\sigma}$.  We have an easy-to-check criterion for this.

\begin{cor}\label{countingrootorbits}
The automorphism $\sigma$ induces a permutation of the root system $\Phi$ of $(G,T)$. If the number of $\sigma$-orbits in $\Phi$ is equal to the number of roots in $\Phi_{\sigma}$, then $W_{\sigma}$ acts on $H^*(BLG_0^{\sigma})$ via $Out(G_0^{\sigma})$. 
\end{cor}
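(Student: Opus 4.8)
The plan is to reduce Corollary \ref{countingrootorbits} to Corollary \ref{blah} by showing that the numerical hypothesis forces the set equality $\pi(\Phi) = \Phi_{\sigma}$, which (as noted in the remark immediately preceding the statement) is sufficient for the hypothesis of Corollary \ref{blah}. So the whole content is the implication: if $\#(\Phi/\sigma) = \#\Phi_{\sigma}$ then $\pi(\Phi) = \Phi_{\sigma}$.

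First I would record the two inclusions that are already available and observe how the counting pins them down. We always have $\Phi_{\sigma} \subseteq \pi(\Phi)$ from the injection of $\lie{t}^{\sigma}$-modules $\lie{g}^{\sigma}/\lie{t}^{\sigma} \subseteq \lie{g}/\lie{t}$ discussed before Corollary \ref{blah}. On the other side, the key point is that the projection $\pi \colon \lie{t}^* \to (\lie{t}^{\sigma})^*$ is constant on $\sigma$-orbits in $\Phi$: since $\sigma$ restricted to $T$ is (conjugate to) an automorphism preserving a Weyl chamber, it permutes $\Phi$, and the induced action on $(\lie{t}^{\sigma})^*$ is trivial because $\lie{t}^{\sigma}$ is exactly the $\sigma$-fixed subalgebra and $\pi$ is the transpose of the inclusion — concretely, for a root $\alpha$ and $H \in \lie{t}^{\sigma}$ one has $(\sigma\alpha)(H) = \alpha(\sigma^{-1}H) = \alpha(H)$. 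Hence $\pi$ factors through the orbit set $\Phi/\sigma$, giving $\#\pi(\Phi) \le \#(\Phi/\sigma)$. Combining, $\#\Phi_{\sigma} \le \#\pi(\Phi) \le \#(\Phi/\sigma)$; under the hypothesis the two ends are equal, so all three quantities agree, and since $\Phi_{\sigma} \subseteq \pi(\Phi)$ are finite sets of equal cardinality they must be equal. Then Corollary \ref{blah} applies directly and we are done.

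I do not anticipate a serious obstacle here; the only point requiring a little care is justifying that $\pi$ is $\sigma$-invariant on roots, i.e. that the relevant automorphism of $T$ used to define the $\sigma$-action on $\Phi$ acts trivially on the quotient $(\lie{t}^{\sigma})^*$. This is really just the statement that $\pi$ is dual to the inclusion of the fixed subspace, together with Lemma \ref{a torus}(b) which guarantees $\sigma|_T$ has finite order and preserves a Weyl chamber (so the permutation of $\Phi$ is the honest one). One should also note in passing that $0 \notin \pi(\Phi)$ is not needed — the argument only uses cardinalities and the inclusion $\Phi_\sigma \subseteq \pi(\Phi)$ — though if desired one can remark that $\Phi_\sigma$ and $\pi(\Phi)$ are genuinely root systems rather than arbitrary finite sets, which is what makes the equality of automorphism groups in Corollary \ref{blah} meaningful. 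A short two-line proof should suffice.
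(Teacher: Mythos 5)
Your proposal is correct and follows essentially the same route as the paper: both use the inclusion $\Phi_{\sigma} \subseteq \pi(\Phi)$ together with the fact that $\pi$ is constant on $\sigma$-orbits to force $\pi(\Phi) = \Phi_{\sigma}$ from the cardinality hypothesis, and then invoke Corollary \ref{blah}. Your explicit verification that $(\sigma\alpha)(H) = \alpha(H)$ for $H \in \lie{t}^{\sigma}$ merely spells out a step the paper states without justification.
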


\begin{proof}
Because we have an inclusion $ \Phi_{\sigma} \subseteq \pi(\Phi)$, it suffices to show that $\pi(\Phi) $ has cardinality equal to $\Phi_{\sigma}$. Since any two roots in $\Phi$ lying in the same $\sigma$-orbit are sent to the same element of $\pi(\Phi)$, the result follows.
\end{proof}

\begin{rmk}
We know from Lemma \ref{a torus} that  $\sigma$ preserves a Weyl chamber of $T$. Hence it also preserves a set positive roots for $\Phi$ and thus the action of $\sigma$ on roots is determined by an automorphism of the Dynkin diagram. We will use this point of view to count orbits in concrete examples in the following section.
\end{rmk}

\section{Examples}\label{examplessect}

In this section, we compute $H^*(BL_{\sigma}G;F)$ in several examples, including all automorphisms of simple Lie groups. By the following argument, it makes little difference which finite cover of the adjoint group we work with.

\begin{prop}\label{fincover}
Let $\phi: G \rightarrow G'$ be a surjective homomorphism of connected, compact Lie groups with finite kernel $K$, and let $\sigma \in Aut(G)$ descend to $\sigma' \in Aut(G')$. Then the induced map on twisted loop groups determines a cohomology isomorphism
$$ H^*(BL_{\sigma}G) \cong  H^*(BL_{\sigma'}G')$$
for coefficient fields of characteristic coprime to the order of $K$. 
\end{prop}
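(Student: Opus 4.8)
The plan is to reduce the statement to the formula of Proposition~\ref{prop1} and then compare the two equivariant cohomology rings directly. First I would use Lemma~\ref{othermodel} to replace $H^*(BL_\sigma G)$ by $H^*_G(G_{Ad_\sigma})$ and $H^*(BL_{\sigma'}G')$ by $H^*_{G'}(G'_{Ad_{\sigma'}})$. The homomorphism $\phi$ realizes $G'$ as $G/K$, and since $K$ is finite and central (the kernel of a surjective homomorphism of connected Lie groups is central), the twisted adjoint action of $G$ on $G_{Ad_\sigma}$ descends through $\phi$ to the twisted adjoint action of $G'$ on $G'_{Ad_{\sigma'}}$; indeed, the map $G_{Ad_\sigma}\to G'_{Ad_{\sigma'}}$ induced by $\phi$ is $\phi$-equivariant. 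So there is a natural map $H^*_{G'}(G'_{Ad_{\sigma'}})\to H^*_G(G_{Ad_\sigma})$ and I want to show it is an isomorphism in characteristic coprime to $\#K$.

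The key point is that the Borel construction $EG'\times_{G'}G'_{Ad_{\sigma'}}$ can be obtained from $EG\times_G G_{Ad_\sigma}$ as a quotient by a free action of $K$ (or more precisely, the two are related by a fibration with fiber $BK$). Concretely, $G'_{Ad_{\sigma'}} = G_{Ad_\sigma}/K$ where $K$ acts on $G_{Ad_\sigma}$ by (twisted-adjoint) left translation through $K\subseteq G$ — and since $K$ is central this action is just translation $x\mapsto kx$, which is free because $K$ acts freely on itself. Moreover $EG$ is a model for $EG'$, and $G$ acts on $EG\times G_{Ad_\sigma}$ with the residual $K\subseteq G$ acting freely; passing to the $G$-quotient in two stages, $\bigl(EG\times_G G_{Ad_\sigma}\bigr)$ fibers over $\bigl(EG\times_{G'} G'_{Ad_{\sigma'}}\bigr)$, or alternatively one has a free $K$-action whose quotient computes the target. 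Either way, the comparison map fits into a setup where Theorem~\ref{cov} (for the finite group $K$, with $\gcd(\mathrm{char}(F),\#K)=1$) or Proposition~\ref{corollary} applies. I would phrase it via Proposition~\ref{corollary}: exhibit a cohomological principal bundle $(f\co X\to Y,K)$ with $X = EG\times_G G_{Ad_\sigma}$ (or a suitable auxiliary space) so that $Y\simeq EG'\times_{G'}G'_{Ad_{\sigma'}}$ and $H^*(Y;F)\cong H^*(X/K;F)$, together with the fact that $BK$ is $F$-acyclic, to conclude $H^*(X;F)\cong H^*(Y;F)$.

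The cleanest route is probably the following: the short exact sequence $1\to K\to G\to G'\to 1$ gives a fibration $BK\to BL_\sigma G\to \text{(something)}$, but more directly, $L_{\sigma'}G' = (L_\sigma G)/K$ where $K$ sits inside $L_\sigma G$ as the constant loops at elements of $K$ (this is where one checks that a loop $\gamma$ with $\gamma(1)=\sigma(\gamma(0))$ descends to a $\sigma'$-twisted loop in $G'$, and conversely any $\sigma'$-twisted loop lifts, with the lift unique up to the constant $K$). Then $BL_{\sigma'}G'$ is the homotopy quotient of $BL_\sigma G$ by the residual (trivial-up-to-homotopy, since $K$ is already a subgroup acting by left translation inside the group) ... here one must be slightly careful: $K$ acts on $BL_\sigma G$ via conjugation in $L_\sigma G$, which is homotopically trivial since $K$ is in the identity component — no wait, $K$ need not be connected. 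Better: use that $BL_{\sigma'}G'$ fits in a fibration $B(L_\sigma G)\to B(L_{\sigma'}G')$ with fiber $BK$ (from $1\to K\to L_\sigma G\to L_{\sigma'}G'\to 1$), and then the Serre spectral sequence with $F$-coefficients collapses because $H^*(BK;F) = H^*(pt;F)$ when $\gcd(\mathrm{char}(F),\#K)=1$ (as noted after Theorem~\ref{cov}), and the $\pi_1$-action on the fiber cohomology is trivial since that cohomology is concentrated in degree $0$.

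The main obstacle is verifying the exactness $1\to K\to L_\sigma G\to L_{\sigma'}G'\to 1$ of topological groups — in particular that $L_\sigma G\to L_{\sigma'}G'$ is surjective (every $\sigma'$-twisted loop in $G'$ lifts to a $\sigma$-twisted loop in $G$) and that the fibration $B L_\sigma G\to BL_{\sigma'}G'$ genuinely has fiber $BK$. Surjectivity requires a path-lifting argument: given $\gamma'\in L_{\sigma'}G'$, lift it to a path $\gamma\co I\to G$ with $\gamma(0)$ chosen freely; then $\gamma(1)$ and $\sigma(\gamma(0))$ both map to $\gamma'(1)=\sigma'(\gamma'(0))$ in $G'$, so they differ by an element of $K$, and one adjusts the lift (using that $K$ is central and finite, so one can absorb the discrepancy, or reparametrize) to arrange $\gamma(1)=\sigma(\gamma(0))$ — this uses that $G\to G'$ is a covering map. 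Once exactness is in hand, the Serre spectral sequence argument is routine. I would also remark that this proposition is what lets us compute with whichever isogeny form of $G$ is most convenient in Section~\ref{examplessect}.
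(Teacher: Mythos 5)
Your main route founders on the surjectivity of $L_{\sigma}G \rightarrow L_{\sigma'}G'$, which you correctly flag as the key obstacle but then dispose of too quickly: it fails in general, so there is no short exact sequence $1 \to K \to L_{\sigma}G \to L_{\sigma'}G' \to 1$. Take $G = SU(2)$, $G' = SO(3)$, $\sigma = \mathrm{id}$: a loop generating $\pi_1(SO(3)) \cong \Z_2$ lifts only to a path from $A$ to $-A$, not to a loop. More precisely, since $K$ is discrete the only freedom in choosing a lift $\gamma$ of $\gamma'$ is multiplication by a constant $k_0 \in K$, which changes the discrepancy $k = \gamma(1)\sigma(\gamma(0))^{-1} \in K$ to $k\cdot k_0\sigma(k_0)^{-1}$; so $\gamma'$ lifts to a twisted loop only if $k$ lies in the image of the coboundary $k_0 \mapsto k_0\sigma(k_0)^{-1}$. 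For $\sigma = \mathrm{id}$ that image is trivial while the discrepancy realizes every class in $\pi_1(G')/\phi_*\pi_1(G) \cong K$, so the map on loop groups is never surjective when $K \neq 1$. Consequently the homotopy fibre of $BL_{\sigma}G \to BL_{\sigma'}G'$ is not $BK$ but a disjoint union of copies of $BK$ indexed by the cokernel of that coboundary; this is not $F$-acyclic (its $H^0$ has the wrong rank), so the Serre spectral sequence argument as you state it does not close. It can be repaired using the local system $\mathcal{H}^0$ of the fibre together with Shapiro's lemma, but that is an extra argument you have not supplied.

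The paper's proof avoids this entirely by staying with the Borel-construction models and factoring the comparison through the intermediate ring $H^*_G(G'_{Ad_{\sigma'}})$: one step changes the group with the space fixed, where $EG\times_G X \to EG'\times_{G'}X$ really does have homotopy fibre $BK$ for any $G'$-space $X$; the other changes the space with the group fixed, using that $G_{Ad_{\sigma}} \to G'_{Ad_{\sigma'}}$ is a $K$-covering and applying the transfer. Your second paragraph gestures at exactly this decomposition, but omits the one substantive point it requires: the transfer only gives $H^*_{G}(G'_{Ad_{\sigma'}}) \cong H^*_{G}(G_{Ad_{\sigma}})^{K}$, and to drop the invariants you must know that the deck action of $K$ on cohomology is trivial. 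The paper gets this from the fact that the deck transformations are translations by central elements of the connected group $G$, hence isotopies. Without that observation (or the falsely claimed exact sequence), neither of your two sketches yields the stated isomorphism.
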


\begin{proof}
For any $G'$-space $X$, we may compose with $\phi$ to make $X$ into a $G$-space and resulting map of homotopy quotients 
$$ EG \times_G X \rightarrow EG' \times_{G'} X $$
has homotopy fibre $BK$.  Since $BK$ is acyclic over the coefficient field $F$, this means that $H^*_{G}(X) \cong H^*_{G'}( X)$ for any $G'$-space $X$ and in particular 
$$H^*_{G}(G'_{Ad_{\sigma'}}) \cong H^*_{G'}( G'_{Ad_{\sigma'}}).$$ 
The map $\phi$ is also a covering map with deck transformation group $K$ acting transitively on the fibres. The transfer map determines an isomorphism
$$ H^*(G)^{K} \cong H^*(G')  $$ for fields of characteristic coprime to $\# K$.  Since the deck transformations are isotopies of $G$, they act trivially on cohomology and $\phi$ is a cohomology isomorphism
$$ H^*(G) \stackrel{\phi}{\cong} H^*(G'),$$
and similarly for equivariant cohomology
$$H^*_{G }(G_{Ad_{\sigma}}) \stackrel{\phi}{\cong} H^*_{G }(G'_{Ad_{\sigma}}) $$
\end{proof}

\subsection{Untwisted loop groups }\label{untwi}

The following proposition is well known (see for example Kuribayashi, Mimura, Nishimoto \cite{kmn}, Theorem 1.2), but I am including a proof both for convenience and because I have not found a statement of the result in this generality in the literature. 

\begin{prop}\label{untwistedcase}
Let $G$ be a compact, connected Lie group. For any field $F$ of characteristic $p$ such that $H^*(G;\Z)$ is $p$-torsion free, we have 
\begin{equation}\label{tensor}
H^*(BLG;F) \cong H^*(G) \otimes H^*(BG) \cong \Lambda(x_1,...,x_r) \otimes F[y_1,...,y_r] . \end{equation} 
where $r$ equals the rank of $G$ and the degrees of the generators are independent of $F$.
\end{prop}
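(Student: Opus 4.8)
The plan is to apply Proposition \ref{prop1} in the untwisted case, where it reduces to the formula $H^*(BLG) \cong H^*_{N(T)}(T_{Ad})$ already recorded as equation (\ref{familiar}). Since $N(T)$ acts on $T_{Ad}$ via the Weyl group $W = N(T)/T$ and $T$ acts trivially on $T_{Ad} = T$, Theorem \ref{cov} (applicable because $\# W$ is coprime to $p$, which follows once we know $H^*(G;\Z)$ is $p$-torsion free — the primes dividing $\# W$ all divide the torsion of $H^*(G;\Z)$) gives $H^*_{N(T)}(T) \cong \left( H^*(BT) \otimes H^*(T) \right)^W$. So the content of the proposition is really the identification of this ring of invariants with $H^*(G) \otimes H^*(BG)$, together with the claim that the generator degrees do not depend on $F$.

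First I would invoke the classical facts of Chevalley/Borel: when $H^*(G;\Z)$ is $p$-torsion free, $H^*(BG;F) \cong F[y_1,\dots,y_r] = H^*(BT;F)^W$ is a polynomial algebra on generators of degrees $2d_1,\dots,2d_r$ (the $d_i$ the fundamental degrees of $W$), and $H^*(G;F) \cong \Lambda(x_1,\dots,x_r)$ is an exterior algebra on generators of degrees $2d_i - 1$; these facts hold over $\Z$ localized away from the torsion primes, hence the degrees are independent of such $F$. Then I would compute $\left(H^*(BT) \otimes H^*(T)\right)^W$ directly. Writing $H^*(BT) = F[V]$ with $V = H^2(BT)$ and $H^*(T) = \Lambda(V)$ (a copy of $V$ in degree $1$, since $H^*(T) \cong \Lambda^* H^1(T)$ and $H^1(T) \cong H^2(BT)$ as $W$-modules up to a degree shift), the tensor product is $F[V] \otimes \Lambda(V)$, and the key algebraic input is that this is a free module over $F[V]^W$ with the Koszul-type structure making $\left(F[V] \otimes \Lambda(V)\right)^W \cong F[V]^W \otimes \Lambda(dy_1,\dots,dy_r)$, where $dy_i$ denotes the image of the polynomial generator $y_i$ under the derivation $F[V] \to F[V]\otimes \Lambda^1(V)$. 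This is exactly the statement that $(S(V^*)\otimes \Lambda(V^*))^W$ is the free exterior algebra on the differentials of the basic invariants — a theorem of Solomon — valid whenever $F[V]^W$ is polynomial, i.e. whenever $p \nmid \# W$.

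The main obstacle is making sure the argument goes through in positive characteristic $p$ (coprime to $\# W$) rather than only over $\Q$: I need Solomon's theorem on $(S(V)\otimes\Lambda(V))^W$ in the modular-but-non-divisible case, and I need the Chevalley theorem that $F[V]^W$ is polynomial for such $F$ (both are standard when $p \nmid \# W$, since then averaging over $W$ is available and $W$ acts as a reflection group over $F$). With those in hand, comparing degrees: $y_i$ has degree $2d_i$ so $dy_i$ has degree $2d_i - 1$, matching the exterior generators of $H^*(G;F)$, and the polynomial generators match those of $H^*(BG;F)$; so the isomorphism $H^*(BLG;F) \cong H^*(G)\otimes H^*(BG)$ holds with $F$-independent degrees. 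I would close by remarking that the first isomorphism in (\ref{tensor}) can alternatively be seen from the fibration $G \to LG \to G$ (evaluation) inducing $G \to BLG \to BG$, whose Serre spectral sequence collapses over $F$ because $H^*(G;F)$ is primitively generated in the relevant degrees; this gives an independent check but the invariant-theory computation is what I would present as the proof.
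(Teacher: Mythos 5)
There is a genuine gap, and it is located in the very first reduction: you assert that the hypothesis ``$H^*(G;\Z)$ is $p$-torsion free'' forces $p$ to be coprime to $\#W$, on the grounds that ``the primes dividing $\#W$ all divide the torsion of $H^*(G;\Z)$.'' This is false. The groups $SU(n)$, $U(n)$ and $Sp(n)$ all have torsion-free integral cohomology, so the proposition is being claimed for \emph{every} prime $p$, yet their Weyl groups have order $n!$ (or $2^n n!$), which is divisible by every prime up to $n$. Already for $G=SU(2)$ and $p=2$ the statement asserts $H^*(BLSU(2);\mathbb{F}_2)\cong \Lambda(x_3)\otimes \mathbb{F}_2[y_4]$, but your route is unavailable there: Proposition \ref{prop1} and Theorem \ref{cov} both require $p\nmid \#W$, the transfer/averaging argument behind $H^*_{N(T)}(T)\cong (H^*(BT)\otimes H^*(T))^W$ breaks down, and you explicitly invoke Chevalley and Solomon only in the non-modular case $p\nmid \#W$. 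So what you have is a proof of the proposition under the strictly stronger hypothesis $p\nmid \#W$, not under the stated one; the implication you need to close the gap simply does not hold, and the cases it misses (e.g.\ the unitary and symplectic groups at small primes) are among the most important instances of the statement.

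Within its range of validity your argument is a legitimate and genuinely different route from the paper's: the paper does not compute the Weyl invariants of $H^*(BT)\otimes H^*(T)$ at all, but instead runs the Serre spectral sequence of the fibration $G\to BLG\to BG$, observes that it collapses over $\Q$ and hence (by torsion-freeness) over $F$, and upgrades the resulting module isomorphism to a ring isomorphism by Leray--Hirsch, lifting the exterior generators through the surjection $i^*\co H^*(BLG;F)\to H^*(G;F)$. That argument needs only the Borel structure theorems for $H^*(G;F)$ and $H^*(BG;F)$ under $p$-torsion-freeness and therefore covers the full stated generality. If you want to keep the invariant-theoretic computation via Solomon's theorem (which does buy something --- an explicit description of the generators as $y_i$ and $dy_i$), you should either weaken the statement to $p\nmid\#W$ (which is all the paper actually uses downstream) or supply a separate argument for the torsion-free-but-modular primes, e.g.\ the spectral-sequence argument the paper gives.
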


\begin{rmk}
A sufficient condition $H^*(G;\Z)$ to be $p$-torsion free is for $p$ to be coprime to the order of the Weyl group of $G$. We refer to Borel \cite{bl} for the degrees of the generators for various simple groups $G$.
\end{rmk}

\begin{proof}
Under the hypotheses above, we have isomorphisms $$H^*(G;F) \cong \Lambda(x_1,...,x_r),$$
an exterior algebra where $r$ and the odd degrees $\deg(x_i)$ are independent of $p$, and  $$H^*(BG;F)\cong F[y_1,...,y_r]$$  such that $\deg(y_i) = \deg(x_i)+1$ (see \cite{bl} section 9). 

The classifying space  $BLG = EG \times_G G_{Ad}$, fits into a fibration sequence
$$ G \stackrel{i}{\rightarrow} BLG \rightarrow BG$$
which has Serre spectral sequence $E_2 = H^*(BG;F) \otimes H^*(G;F) $ converging to $H^*(BLG;F)$.  This spectral sequence is known to collapse for $F= \Q $  (see \cite{b2} for example) and thus by the universal coefficient theorem must collapse for all $F$ under consideration, proving that (\ref{tensor}) holds as an isomorphism of $H^*(BG;F)$-modules. Since $H^*(G;F) = \Lambda(x_1,...,x_r)$ is free as a supercommutative algebra, we can upgrade (\ref{tensor}) to an algebra isomorphism using the Leray-Hirsch Theorem to lift the generators of $x_1,...,x_r$ via the surjection $i^*: H^*(BLG;F) \rightarrow H^*(G;F)$.
\end{proof}

\subsection{$SU(n)$ with entry-wise complex conjugation} 

Let $\sigma \in Aut(SU(n))$ denote matrix entry-wise complex conjugation. For $n \geq 3$, $[\sigma]$ generates the outer automorphism group $Out(SU(n)) \cong \Z_2$. The fixed point set $SU(n)^{\sigma} = SO(n)$. Observe that $\sigma(A)^{-1} = A^T$ where $A^T$ denotes the transpose, so the twisted adjoint action is 

\begin{equation}\label{conjugating a bilnear form}
 Ad_{\sigma}(A) ( X) = A X \sigma(A)^{-1} = A X A^T.
 \end{equation}   
 which may be interpreted as a change of basis operation for a bilinear form (see Remark \ref{bilener}).

\begin{prop}\label{refd}
For coefficient fields $F$ coprime to $n!$, the inclusion $SO(n) \hookrightarrow SU(n)$ determines an isomorphism
$$  H^*(BL_{\sigma}SU(n);F) \cong H^*(BLSO(n);F)^{\Z_2} $$ 
where $\Z_2$ acts on $SO(n)$ by an orientation reversing change of basis.
\end{prop}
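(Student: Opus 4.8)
The plan is to apply Theorem \ref{thm0} directly with $G = SU(n)$ and $\sigma$ entry-wise complex conjugation, and then identify the resulting group $W_\sigma$ and its action on $H^*(BLSO(n);F)$ explicitly. First I would check the hypotheses of Theorem \ref{thm0}: the Weyl group of $SU(n)$ is $S_n$ of order $n!$; the outer automorphism $[\sigma]$ has order $2$, which divides $n!$ for $n \geq 2$; and $\pi_0(SU(n)^\sigma) = \pi_0(SO(n))$ is trivial since $SO(n)$ is connected. So for fields $F$ of characteristic coprime to $n!$, Theorem \ref{thm0} gives $H^*(BL_\sigma SU(n);F) \cong H^*(BLSO(n);F)^{W_\sigma}$ where $W_\sigma = N_{SU(n)}(T^\sigma)/T$, with $T^\sigma$ a maximal torus of $SO(n)$ and $T$ a maximal torus of $SU(n)$ containing it.

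Next I would compute $W_\sigma$. By Lemma \ref{a torus}, $C(T^\sigma) = T$ is a maximal torus of $SU(n)$, so I can take $T$ to be the standard diagonal torus and $T^\sigma$ the standard maximal torus of $SO(n)$ sitting block-diagonally. The key computation is that $W_\sigma = N_{SU(n)}(T^\sigma)/T$ sits in an extension $1 \to W_{SO(n)} \to W_\sigma \to W_\sigma/W_{SO(n)} \to 1$, where $W_{SO(n)} = N_{SO(n)}(T^\sigma)/T^\sigma$ is the Weyl group of $SO(n)$. I expect $W_\sigma/W_{SO(n)}$ to be $\Z_2$, generated by the class of an element of $O(n) \cap SU(n)$ (or more precisely an element of $SU(n)$ normalizing $T^\sigma$) that acts on $SO(n)$ as conjugation by an orientation-reversing orthogonal matrix — i.e. as the outer automorphism of $SO(n)$ for $n$ even (and inner, hence trivial on cohomology, for $n$ odd, but the statement is phrased uniformly). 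To pin this down I would count: $|W_\sigma| = |N_{SU(n)}(T^\sigma)/T|$, and compare with $|W_{SO(n)}|$; for $SO(2m)$ one has $|W| = 2^{m-1} m!$ and the full "Weyl group of $O(2m)$" (which is $2^m m!$, the group preserving the root system and allowing an odd number of sign changes) is exactly what I expect $W_\sigma$ to be, while for $SO(2m+1)$ the Weyl group is already the full $2^m m!$ so $W_\sigma = W_{SO(n)}$.

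Then I would invoke the simplification results of \S\ref{Simplifying}: I want to show the $W_\sigma$-action on $H^*(BLSO(n);F)$ factors through $Out(SO(n))$. The cleanest route is Corollary \ref{blah} or Corollary \ref{countingrootorbits}: the roots of $SU(n)$ are $e_i - e_j$, $\sigma$ acts on $T$ by an orientation-reversing element (so on the character lattice by $e_i \mapsto -e_i$ composed with a permutation fixing a Weyl chamber), and I would check that $\pi(\Phi) = \Phi_\sigma$, i.e. the projected root system of $SU(n)$ to $(\lie{t}^\sigma)^*$ equals the root system of $SO(n)$ (roots $\pm e_i \pm e_j$, plus $\pm e_i$ when $n$ is odd). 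This is a short direct check. Once that holds, Proposition \ref{blah0} tells us $W_\sigma$ acts via a well-defined homomorphism $W_\sigma \to Out(SO(n))$, which by the extension above has image the class of the orientation-reversing change of basis — giving $\Z_2$ for $n$ even and the trivial group for $n$ odd; in either case $H^*(BLSO(n);F)^{W_\sigma} = H^*(BLSO(n);F)^{\Z_2}$ with $\Z_2$ acting as stated. The main obstacle I anticipate is the explicit identification of $W_\sigma$ — in particular verifying that the extra generator beyond $W_{SO(n)}$ really is represented by an element of $SU(n)$ (not just $U(n)$) normalizing $T^\sigma$, and correctly tracking how determinant-one-ness interacts with the distinction between $n$ even and $n$ odd; the root-system bookkeeping and the cohomology computation itself are routine given the earlier results.
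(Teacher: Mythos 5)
Your overall strategy matches the paper's (apply Theorem \ref{thm0}, use \S 6 to show the $W_{\sigma}$-action factors through $Out(SO(n))$, then identify the image as the orientation-reversing $\Z_2$), and your verification of the hypotheses of Theorem \ref{thm0} is fine. But the pivotal step contains a false claim: $\pi(\Phi) \neq \Phi_{\sigma}$ here, so the ``short direct check'' you propose would not go through, and Corollary \ref{countingrootorbits} is not applicable. Concretely, the roots $e_i - e_{n+1-i}$ are fixed by $\sigma$ and project to $\pm 2E_i = \pm(e_i - e_{n+1-i}) \in (\lie{t}^{\sigma})^*$; these lie in $\pi(\Phi)$ but are not roots of $SO(n)$ (their root vectors live in the $(-1)$-eigenspace of $\sigma$ on $\lie{su}(n)$, not in $\lie{so}(n)$). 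For instance with $n=4$ the number of $\sigma$-orbits in $\Phi$ is $8$ while $SO(4)$ has only $4$ roots. The paper instead identifies $\pi(\Phi)$ as type $C_{n/2}$ for $n$ even (and a non-reduced analogue for $n$ odd), observes that $\Phi_{\sigma}$ is the complement of the long roots $\{\pm 2E_i\}$ in $\pi(\Phi)$, and verifies the weaker hypothesis of Corollary \ref{blah}, namely that the automorphism groups of $\pi(\Phi)$ and $\Phi_{\sigma}$ coincide. Your proof needs this substitute argument.

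The second gap is one you flag yourself but do not close: whether the orientation-reversing outer automorphism of $SO(n)$ (for $n$ even) is actually realized by an element of $W_{\sigma}$, i.e.\ by conjugation by an element of $SU(n)$ normalizing $T^{\sigma}$. Note that $O(n) \cap SU(n) = SO(n)$, so your first suggestion of ``an element of $O(n)\cap SU(n)$'' cannot produce an orientation-reversing change of basis. The paper's resolution is the scalar trick: take $P \in O(n)$ orientation-reversing and a unit scalar $\lambda$ with $\det(\lambda P)=1$; then $\lambda P \in SU(n)$ and conjugation by $\lambda P$ agrees with conjugation by $P$ on $SO(n)$. With these two repairs your argument becomes essentially the paper's proof; the remaining case distinction ($n$ odd: $Out(SO(n))$ trivial and the change of basis is inner, hence acts trivially on cohomology) you handle correctly.
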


\begin{proof}
For $n= 1,2$, we have $T^{\sigma} = G_0^{\sigma}$, so Proposition \ref{blah0} applies immediately. For $n \geq 3$, we must study the action of $\sigma$ on the root system $\Phi$ of $SU(n)$ in order to apply Corollary \ref{blah}. The roots of $SU(n)$ are $e_i - e_j$ for  $i,j \in \{1,...,n \}$, $i\neq j$, and the automorphism induces the involution of roots
$$\sigma( e_i -e_j)   = e_{n+1-j} -e_{n+1-i}.$$ 
Since $\sigma$ has order two, the projection map satisfies $$\pi(x) = \frac{x+ \sigma(x)}{2}.$$  

For $ i < (n+1)/2$,  define 
$$E_i = -E_{n+1-i} =  \frac{1}{2}(e_i-e_{n+1-i})  .$$ 

If $n$ is even, then for $ i\neq j$ we have
$$
 \pi(e_i -e_j)  =  \begin{cases}
 \pm E_i \pm E_j  & \text{ if $i+j \neq n+1$}\\
 \pm 2E_i  & \text { if $ i+j = n+1$}\\
                    \end{cases} 
$$
which is exactly the root system  $C_{n/2}$. If $n$ is odd, then for $ i\neq j$ we have
$$
 \pi(e_i -e_j)  =  \begin{cases}

 \pm E_i \pm E_j  & \text{ if $i+j \neq n+1$ and $\frac{n+1}{2} \not\in\{i,j\}$ }\\
  \pm E_i  & \text{ if $j = \frac{n+1}{2}$}\\
  \pm E_j  & \text{ if $i = \frac{n+1}{2}$}\\
 \pm 2E_i  & \text { if $ i+j = n+1$}.\\
                    \end{cases} 
$$
which has the same automorphism group as $C_{(n-1)/2}$. In both cases, the root system $\Phi_{\sigma}$ of $SO(n)$ is the complement of $\{\pm 2 E_i| i= 1,...,[n/2] \} $ in $\pi(\Phi)$ and the automorphism group $\Phi_{\sigma}$ agrees with that of $\pi(\Phi)$ (both being equal to the automorphism group of the root lattice of $SO(n)$).   It follows from Corollary \ref{blah} that the $W_{\sigma}$-action on $H^*(BLSO(n))$ is induced by outer automorphisms of $SO(n)$.

If $n$ is odd, the outer automorphism group of $SO(n)$ is trivial, so by Theorem \ref{thm0} we have 
$$ H^*(BL_{\sigma}SU(n)) \cong H^*(BLSO(n)).$$
Moreover, an orientation reversing change of basis must be an inner automorphism of $SO(n)$, hence also of $LSO(n)$, thus it acts by an isotopy of $BLSO(n)$ (see Segal \cite{s} section 3), so $\Z_2$ acts trivially on cohomology and the result follows.

In case $n$ is even, the outer automorphism group of $SO(n)$ is $\Z_2$ and is generated by orientation reversing change of basis.  Let $ P \in O(n)$ be an orientation reversing change of basis matrix.  Then $i P \in SU(n)$ and for any $X \in SO(n)$,
$$ P X P^{-1} = (iP) X (i P)^{-1} .$$
Thus the change of basis is induced by conjugation by an element of $SU(n)$; the result now follows from Theorem \ref{thm0}
\end{proof}

The twisted action (\ref{conjugating a bilnear form}) extends naturally to a twisted action of $U(n)$ on $U(n)$.

\begin{prop}\label{suandu}
The standard inclusion $SU(n)\hookrightarrow U(n)$ induces a cohomology isomorphism
$$ H^*(BL_{\sigma}U(n);F) \cong H^*(BL_{\sigma}SU(n);F)  $$
for coefficient fields $F$ of characteristic  coprime to both $2$ and $n$. 
\end{prop}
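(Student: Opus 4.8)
The plan is to realize both twisted loop groups through the twisted adjoint actions furnished by Lemma \ref{othermodel}, so that the statement reduces to comparing $H^*_{U(n)}(U(n)_{Ad_\sigma})$ with $H^*_{SU(n)}(SU(n)_{Ad_\sigma})$. First I would note that $U(n) = SU(n) \cdot Z$ where $Z \cong U(1)$ is the center, and that $U(n)$ is a finite quotient of $SU(n) \times U(1)$ by the kernel $K = \{(\zeta I, \zeta^{-1}) : \zeta^n = 1\} \cong \Z_n$ embedded diagonally. Since $\sigma$ (entrywise complex conjugation) preserves this structure — it fixes $SU(n)$ setwise and acts on $Z$ by inversion — Proposition \ref{fincover} lets me replace $U(n)$ by $SU(n) \times U(1)$ at the cost of inverting $n$, which is already assumed.

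Next I would analyze the twisted adjoint action of $SU(n)\times U(1)$ on $(SU(n)\times U(1))_{Ad_\sigma} \cong SU(n)_{Ad_\sigma} \times U(1)_{Ad_\sigma}$. On the $U(1)$ factor, $\sigma$ acts by $z \mapsto \bar z$, so $Ad_\sigma(w)(z) = w z \bar w = w^2 z$; the action of the $SU(n)$ factor on the $U(1)$ coordinate is trivial. Thus the $U(1)$-factor contributes a homotopy quotient $U(1)_{Ad_\sigma} / hU(1)$, which is the quotient of $U(1)$ by translation through $w \mapsto w^2$, i.e. the orbit space is $U(1)/\{w^2\} \cong U(1)$ and the projection has homotopy fiber $B(\mu_2)$, the classifying space of the $2$-torsion subgroup. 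Since $\mathrm{char}(F) \neq 2$, $B\mu_2$ is acyclic, so this factor contributes $H^*(U(1))$ and nothing else. The remaining $SU(n)$-equivariance is exactly $H^*_{SU(n)}(SU(n)_{Ad_\sigma})$, giving
\[
H^*(BL_\sigma U(n);F) \cong H^*(U(1)) \otimes H^*(BL_\sigma SU(n);F)?
\]
— but this is not yet the claimed isomorphism, so the argument needs one more turn.

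The fix, and the main obstacle, is that passing from $SU(n)\times U(1)$ back down to $U(n)$ by Proposition \ref{fincover} kills the spurious $H^*(U(1))$ factor: the diagonal $\Z_n = K$ acts on the $U(1)$-coordinate by the $n$-fold cover, so after taking $K$-invariants (transfer, valid since $\gcd(\mathrm{char}(F),n)=1$) the $U(1)$-direction collapses. Concretely, I would track the $K$-action through the Künneth decomposition above: $K$ acts trivially on the $SU(n)$-equivariant factor (inner automorphisms act by isotopy, Segal \cite{s}) and acts on $H^*(U(1))$ via the degree-$n$ covering $U(1)\to U(1)$, whose induced map on $H^1$ is multiplication by $n$; over $F$ this is an isomorphism, but the relevant statement is that the orbit space of $U(1)$ under the $K$-translation is again $U(1)$, and combined with the earlier collapse of $B\mu_2$ the net homotopy fiber over $SU(n)_{Ad_\sigma}/hSU(n)$ of the full $U(n)$-quotient map is $B(\mu_{2n}\cap K)$-type and acyclic. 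Assembling these, the composite $H^*(BL_\sigma SU(n);F) \to H^*(BL_\sigma U(n);F)$ induced by $SU(n)\hookrightarrow U(n)$ is an isomorphism. The delicate bookkeeping is exactly the interaction between the $2$-torsion from $\sigma|_{U(1)}$ and the $n$-torsion from $K$, which is why both $2$ and $n$ must be inverted; I expect that to be the step requiring the most care.
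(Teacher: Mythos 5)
Your overall strategy is the same as the paper's: present $U(n)$ as the quotient of $SU(n)\times U(1)$ by the diagonal $\Z_n$, apply Proposition \ref{fincover}, and then analyze the two Künneth factors of the twisted adjoint action separately. However, your treatment of the $U(1)$ factor contains a genuine error that then forces you into an incorrect ``fix.'' The twisted action of $w\in U(1)$ on $z\in U(1)_{Ad_{\sigma}}$ is $w\cdot z = wz\sigma(w)^{-1} = wzw = w^{2}z$ (note your intermediate expression $wz\bar w$ would equal $z$; the correct one is $wzw$). Since squaring is surjective on the circle, this action is \emph{transitive} with stabilizer $\mu_{2}$; the orbit space is a point, not $U(1)$, and the homotopy quotient is $B\mu_{2}$, which is acyclic over $F$ because $\mathrm{char}(F)\neq 2$. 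So the $U(1)$ factor contributes nothing at all, the spurious $H^*(U(1))$ tensor factor never arises, and the proof is finished at that point. This is exactly what the paper does.

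The repair you then attempt cannot work as stated. Proposition \ref{fincover} asserts that the finite covering $SU(n)\times U(1)\to U(n)$ induces an \emph{isomorphism} on equivariant cohomology over $F$; it is not a passage to $K$-invariants of a Künneth decomposition, so it cannot ``kill'' a genuine tensor factor of $H^*(U(1))$ if one were really present. Moreover, even if one did take $K$-invariants, $K\cong\Z_n$ acts on the $U(1)$ direction by translations, which are isotopic to the identity and hence act trivially on $H^*(U(1))$ (your claim that the induced map on $H^1$ is multiplication by $n$ conflates the covering map with the deck transformations); the invariants would still be all of $H^*(U(1))$. The ``delicate interaction between the $2$-torsion and the $n$-torsion'' is a red herring: $2$ is inverted to make $B\mu_2$ acyclic, and $n$ is inverted to apply Proposition \ref{fincover}, and these two steps are independent.
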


\begin{proof}
Consider the surjective group homomorphism $$\phi: U(1) \times SU(n) \rightarrow U(n), ~~~~ (\lambda, A) \mapsto \lambda A,$$
which has finite kernel $\Z_n$. The homomorphism is equivariant with respect to entry-wise complex conjugation, so by Lemma \ref{fincover}.

$$ H^*_{U(1) \times SU(n)}((U(1) \times SU(n))_{Ad_{\sigma}}) \cong H^*_{U(n)}(U(n)_{Ad_{\sigma}}). $$

Moreover, we have isomorphisms
\begin{eqnarray*}
H^*_{U(1) \times SU(n) }((U(1) \times SU(n))_{Ad_{\sigma}}) & \cong & H^*_{U(1) }(U(1)_{Ad_{\sigma}}) \otimes H^*_{ SU(n) }( SU(n)_{Ad_{\sigma}})\\
&\cong&  H^*(B \Z_2) \otimes H^*_{ SU(n) }( SU(n)_{Ad_{\sigma}})\\
&\cong&  H^*_{ SU(n) }( SU(n)_{Ad_{\sigma}})\\
\end{eqnarray*} 
because the twisted $U(1)$-action on $U(1)_{Ad_{\sigma}}$ is transitive with stabilizer  $\Z_2$ and $B\Z_2$ is acyclic over $F$.
\end{proof}

\begin{cor}\label{[rcoelr2}
Let $F$ be a field of characteristic coprime to $n!$ and let $n =2m$ or $n =2m+1$. The standard inclusion of groups $$ LO(n) \hookrightarrow  L_{\sigma} U(n) \hookleftarrow L_{\sigma}SU(n)$$ induces isomorphisms
\begin{eqnarray*}  
H^*(BLO(n);F) & \cong & H^*(BL_{\sigma}U(n);F) \cong H^*(BL_{\sigma}SU(n);F) \\ & \cong & \Lambda(x_3, x_7,...,x_{4m -1}) \otimes S(y_4,...,y_{4m}),
\end{eqnarray*}
where the subscripts indicate the degrees of the generators.
\end{cor}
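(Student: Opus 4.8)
The plan is to identify each of the three rings with the $\Z_2$-invariants of $H^*(BLSO(n);F)$ for a \emph{single} $\Z_2$-action, and then compute those invariants using Proposition \ref{untwistedcase}.

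\emph{Reduction.} Since a characteristic coprime to $n!$ is in particular coprime to $2$ and to $n$, Proposition \ref{suandu} gives $H^*(BL_\sigma U(n);F)\cong H^*(BL_\sigma SU(n);F)$ via $SU(n)\hookrightarrow U(n)$, and Proposition \ref{refd} gives $H^*(BL_\sigma SU(n);F)\cong H^*(BLSO(n);F)^{\Z_2}$ via $SO(n)\hookrightarrow SU(n)$, where $\Z_2$ acts by an orientation-reversing change of basis $\tau$. For $BLO(n)$ I would argue that any continuous loop in $O(n)$ has connected, hence monochromatic, image, so recording its component gives a surjective homomorphism $LO(n)\to\pi_0(O(n))=\Z_2$ with kernel $LSO(n)$; the induced map $BLSO(n)\to BLO(n)$ is then a regular double cover whose deck involution is conjugation by (a constant loop at) a reflection, i.e. again $L\tau$. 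Theorem \ref{cov} then yields $H^*(BLO(n);F)\cong H^*(BLSO(n);F)^{\Z_2}$ for the same action. Because the inclusions in the statement, together with the common inclusion of $LSO(n)$, form a commuting diagram, and because $H^*(BL_\sigma SU(n);F)$ and $H^*(BLO(n);F)$ both embed in $H^*(BLSO(n);F)$ as precisely this invariant subring, the induced maps are the asserted isomorphisms. The subtle point here is that the two $\Z_2$-actions on $H^*(BLSO(n);F)$ coincide; this holds because by Segal's theorem (\cite{s}, section 3) the action factors through $Out(SO(n))$, in which the orientation-reversing class is the unique nontrivial candidate (and is trivial when $n$ is odd).

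\emph{Computing the invariants.} By Proposition \ref{untwistedcase} --- applicable since coprimality to $n!$ forces $H^*(SO(n);\Z)$ to be $p$-torsion-free --- we have $H^*(BLSO(n);F)\cong H^*(SO(n);F)\otimes H^*(BSO(n);F)$, and we must take $\tau$-invariants. If $n=2m+1$ then $-I$ is a reflection, so $\tau$ is inner in $SO(2m+1)$ and acts trivially; the ring is $\Lambda(x_3,x_7,\dots,x_{4m-1})\otimes F[p_1,\dots,p_m]$ with $\deg p_i=4i$, which is the claimed ring. If $n=2m$ then $H^*(SO(2m);F)=\Lambda(x_3,\dots,x_{4m-5})\otimes\Lambda(x_{2m-1})$ and $H^*(BSO(2m);F)=F[p_1,\dots,p_{m-1}]\otimes F[e]$ with $\deg e=2m$; by naturality of transgression (see \cite{bl}), $\tau$ fixes each $p_i$ and each $x_{4i-1}$ but negates the Euler class $e$ and the transgressive generator $x_{2m-1}$. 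Hence $H^*(BLSO(2m);F)^{\Z_2}=\Lambda(x_3,\dots,x_{4m-5})\otimes F[p_1,\dots,p_{m-1}]\otimes\bigl(\Lambda(x_{2m-1})\otimes F[e]\bigr)^{\Z_2}$, and a direct check gives $\bigl(\Lambda(x_{2m-1})\otimes F[e]\bigr)^{\Z_2}=F[e^2]\oplus e\,x_{2m-1}\,F[e^2]\cong\Lambda(e\,x_{2m-1})\otimes F[e^2]$ with $\deg(e\,x_{2m-1})=4m-1$ and $\deg(e^2)=4m$. Reassembling, the exterior generators have degrees $4i-1$ and the polynomial generators degrees $4i$ for $i=1,\dots,m$, i.e. $\Lambda(x_3,x_7,\dots,x_{4m-1})\otimes S(y_4,\dots,y_{4m})$, as required.

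I expect the main obstacle to be the bookkeeping in the reduction step: showing that the three comparison maps are genuinely the isomorphisms induced by the stated group inclusions, not merely abstract ring isomorphisms. This comes down to checking that the $\Z_2$-actions agree (via Segal) and that the relevant square of classifying spaces commutes. The invariant-ring computation is then a routine application of the classical cohomology of $SO(n)$ and $BSO(n)$ in characteristic $p$ together with the standard behavior of Euler and Pontryagin classes under orientation reversal.
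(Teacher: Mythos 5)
Your proposal is correct and follows essentially the same route as the paper: reduce all three rings to the $\Z_2$-invariants of $H^*(BLSO(n);F)$ using Propositions \ref{suandu} and \ref{refd} together with the index-two inclusion $LSO(n)\subseteq LO(n)$, and then identify the invariant subring. The only difference is that you carry out explicitly the invariant computation (via $H^*(SO(n))\otimes H^*(BSO(n))$ and the action on Euler and Pontryagin classes) and the verification that the two $\Z_2$-actions agree, both of which the paper's two-line proof leaves implicit.
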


\begin{proof}
The loop group $LSO(n)$ sits inside $LO(n)$ as an index two subgroup, so  $ H^*(BLO(n)) \cong H^*(BLSO(n))^{\Z_2} $.  The result now follows from Propositions \ref{suandu} and \ref{refd}.
\end{proof}

\begin{rmk}
Corollary  \ref{[rcoelr2} stands in contrast with the formula 
$$ H^*(BL_{\sigma}U(n);\Z_2) \cong  H^*(BLU(n);\Z_2) \cong   \Lambda(x_1, x_3,...,x_{2n -1}) \otimes S(y_2, y_4,...,y_{2n}) $$
derived in Baird \cite{b3}.
\end{rmk}

\begin{rmk}\label{bilener}
The twisted action of $U(n)$ on $U(n)$  is homotopy equivalent to the change of basis action of $GL_n(\C)$ on the space of (not necessarily symmetric) non-degenerate bilinear forms on $\C^n$.  Thus Corollary \ref{[rcoelr2} also calculates the cohomology of the topological moduli stack of rank $n$ non-degenerate bilinear forms over $\C$.
\end{rmk}

\subsection{$SO(2n)$ with orientation reversing change of basis}

The Weyl group of $SO(2n)$ has order $2^{n-1}n!$. An orientation reversing change of basis determines an automorphism $\sigma \in Aut(SO(2n))$ of order two, which generates $Out(SO(2n))$ for $n \geq 5$. The corresponding twisted loop group $L_{\sigma}SO(2n)$ can be understood as the gauge group of orthogonal, orientation preserving gauge transformations of a non-orientable $\R^n$-bundle over $S^1$.

\begin{prop}\label{[rcoelr}
The block sum inclusion $SO(2n-1) \hookrightarrow SO(2n)$ induces a cohomology isomorphism 
$$H^*(BL_{\sigma}SO(2n);F) \cong H^*(BLSO(2n-1);F) \cong \Lambda(x_3, x_7,...,x_{4n -5}) \otimes S(y_4,...,y_{4n -4})$$ 
for coefficient field $F$ of odd characteristic cop rime to $n!$.
\end{prop}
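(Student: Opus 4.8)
The plan is to verify the hypotheses of Corollary \ref{countingrootorbits} (or failing that, Corollary \ref{blah}) for the pair $(SO(2n), \sigma)$, deduce that $W_\sigma$ acts on $H^*(BLSO(2n-1))$ through $Out(SO(2n-1))$, observe that $Out(SO(2n-1))$ is trivial, and then handle the residual $\mathbb{Z}_2$ carefully as was done in Proposition \ref{refd}. First I would identify $G_0^\sigma$. An orientation-reversing change of basis $\sigma = Ad_P$ with $P = \mathrm{diag}(1,\dots,1,-1) \in O(2n)$ has fixed subgroup $SO(2n)^\sigma \cong S(O(2n-1)\times O(1)) \cong O(2n-1)$, with identity component $SO(2n-1)$; so $G_0^\sigma = SO(2n-1)$, as the statement asserts via the block-sum inclusion. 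A maximal torus $T^\sigma \subset SO(2n-1)$ has rank $n-1$, one less than $\mathrm{rank}(SO(2n)) = n$, consistent with Lemma \ref{a torus}.

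Next I would carry out the root-counting. With $T \subset SO(2n)$ the standard maximal torus, the roots of $D_n$ are $\pm e_i \pm e_j$ for $1 \le i < j \le n$, a total of $2n(n-1)$ roots. The automorphism $\sigma$ (orientation reversal in the last coordinate) acts on the torus by $e_n \mapsto -e_n$ and fixes $e_1,\dots,e_{n-1}$; this is the standard outer automorphism of $D_n$ that swaps the two "fork" nodes of the Dynkin diagram. Its action on roots sends $\pm e_i \pm e_j \mapsto \pm e_i \pm e_j$ for $i<j\le n-1$ (fixed), and $\pm e_i \pm e_n \mapsto \pm e_i \mp e_n$ (paired in orbits of size $2$). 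Counting: there are $2(n-1)(n-2)$ roots not involving $e_n$, each a singleton orbit, and $4(n-1)$ roots involving $e_n$, forming $2(n-1)$ orbits of size $2$; total number of $\sigma$-orbits is $2(n-1)(n-2) + 2(n-1) = 2(n-1)^2$. On the other side, $\Phi_\sigma$ is the root system of $SO(2n-1)$, type $B_{n-1}$, which has $2(n-1)^2$ roots. Since the orbit count equals $|\Phi_\sigma|$, Corollary \ref{countingrootorbits} applies directly: $W_\sigma$ acts on $H^*(BLSO(2n-1))$ through $Out(SO(2n-1))$.

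Now $Out(SO(2n-1))$ is trivial (the Dynkin diagram $B_{n-1}$ has no symmetries for $n-1 \ge 2$; the small cases $n=1,2$ where $T^\sigma = G_0^\sigma$ are handled by Proposition \ref{blah0} directly, and I should note $n\ge 2$ is implicit since the statement has nonempty generator lists). Hence $W_\sigma$ acts trivially on $H^*(BLSO(2n-1))$, and Theorem \ref{thm0} gives $H^*(BL_\sigma SO(2n); F) \cong H^*(BLSO(2n-1); F)^{W_\sigma} = H^*(BLSO(2n-1); F)$. The explicit form on the right then comes from Proposition \ref{untwistedcase}: since $F$ has odd characteristic coprime to $n!$ (hence coprime to $|W_{SO(2n-1)}| = 2^{n-1}(n-1)!$), $H^*(SO(2n-1);\mathbb{Z})$ is $p$-torsion free, and $H^*(SO(2n-1);F) \cong \Lambda(x_3,x_7,\dots,x_{4n-5})$ with $H^*(BSO(2n-1);F) \cong S(y_4,y_8,\dots,y_{4n-4})$ (the Pontryagin-class generators in degrees $4,8,\dots,4(n-1)$, per Borel \cite{bl} \S 9), giving the stated tensor product.

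The main obstacle I anticipate is the bookkeeping in the root-orbit count — making sure I have the right ambient root system $D_n$, the correct action of the outer automorphism on it, and a correct enumeration of $|\Phi_\sigma| = |B_{n-1}|$, so that the two numbers genuinely match. A secondary point requiring care, as in Proposition \ref{refd}, is that $W_\sigma$ might a priori act through an actual (not just outer) automorphism in a way that still needs the "orientation-reversing change of basis is realized by conjugation in the ambient group" argument — but since $Out(SO(2n-1))$ is trivial here, once Corollary \ref{countingrootorbits} is in hand the action is automatically trivial on cohomology and this subtlety evaporates. If the direct orbit count were to come out unequal, the fallback is Corollary \ref{blah}, checking instead that $\mathrm{Aut}(\pi(\Phi))$ and $\mathrm{Aut}(\Phi_\sigma)$ coincide (both being the automorphism group of the $B_{n-1}$ root lattice), which is the same kind of argument used in the $SU(n)$ case for odd $n$.
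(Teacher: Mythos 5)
Your proposal is correct and follows essentially the same route as the paper: identify $G_0^{\sigma}\cong SO(2n-1)$, count $\sigma$-orbits on the $D_n$ roots (both computations give $2(n-1)^2$, matching $|B_{n-1}|$), invoke Corollary \ref{countingrootorbits}, and conclude via triviality of $Out(SO(2n-1))$ and Theorem \ref{thm0}. One small slip: for $n=2$ you have $G_0^{\sigma}=SO(3)\neq T^{\sigma}$, so Proposition \ref{blah0} does not apply as you suggest, but this is harmless since your root-orbit count already covers $n=2$ (the paper instead singles out only $n=1$, via $SO(2)\cong U(1)$).
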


\begin{proof}
Set $$\sigma(A) = PAP^{-1}$$ where $$P = 	\left( \begin{array}{cc}
Id_{2n-1} & 0   \\
0 & -1  \end{array} \right).$$   Then $SO(2n)^{\sigma}$ is isomorphic to $O(2n-1)$ by the injection
\begin{equation}\label{injection}
O(2n-1) \hookrightarrow SO(2n),~~~ B \mapsto 	\left( \begin{array}{cc}
B & 0   \\
0 & \det(B)  \end{array} \right).\end{equation}
which has identity component isomorphic to $SO(2n-1)$. 

For $n =1$, we have $SO(2) \cong U(1)$ so this case has already been covered by Corollary \ref{[rcoelr2}.  

For $n \geq 2$, the root system of $SO(2n)$ consists of vectors $\pm (e_i \pm e_j)$ for $i\neq j$ in $\{1,...,n\}$. The involution fixes $e_i$ for $ i< n$ and sends $e_n$ to $-e_{n}$. One easily checks that there are $ 4 {n \choose 2 } -2(n-1) = 2 (n-1)^2$ which equals the number of roots of the fixed point subgroup $SO(2n-1)$. Thus by Corollary \ref{countingrootorbits}, $W_{\sigma}$-action on $H^*(BLSO(2n-1))$ is induced by outer automorphisms of $SO(2n-1)$. The outer automorphism group of $SO(2n-1)$ is trivial, so the result follows by Theorem \ref{thm0}.
\end{proof}

\subsection{$SO(8)$ with the triality automorphism}

The outer automorphism group of $SO(8)$ is the permutation group $S_3$. The order three automorphisms are represented by the triality automorphisms $\sigma$, $\sigma^2 \in Aut(SO(8))$ which are related to realization of $SO(8)$ as orthogonal transformations of the underlying vector space of the octonions (see Baez \cite{b}).  The fixed point set $SO(8)^{\sigma}$ is equal to the automorphism group of the octonions $G_2$.  

The root system of $SO(8)$ has $12$ positive roots.  The triality automorphism determines 6 orbits: three of order one and three of order three.   Since the root system of $G_2$ has six positive roots, Corollary \ref{countingrootorbits} implies that  $W_{\sigma}$-acts via outer automorphisms of $G_2$.  Since $G_2$ has trivial outer automorphism group and they Weyl group of $SO(8)$ has order $3\cdot 2^6$, we conclude from Theorem \ref{thm0} that

\begin{prop}
For $\sigma \in Aut(SO(8))$ a triality automorphism we have 
 $$H^*(BL_{\sigma} SO(8);F) \cong H^*(BL_{\sigma^2} SO(8);F) \cong H^*(BLG_2;F) \cong \Lambda(x_3,x_{11}) \otimes F[y_4,y_{12}].$$ for coefficient field $F$ of characteristic coprime to $6$.
\end{prop}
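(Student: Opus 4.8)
The plan is to verify the hypotheses of Theorem~\ref{thm0} for $G = SO(8)$ and $\sigma$ a triality automorphism, then to combine it with the known computation of $H^*(BLG_2;F)$. First I would fix a maximal torus $T \subseteq SO(8)$ and note that, by Lemma~\ref{a torus}, $\sigma$ may be taken to preserve a Weyl chamber, so its action on $\Phi$ is realized by an order-three automorphism of the $D_4$ Dynkin diagram. Concretely, writing the twelve positive roots of $D_4$ in the usual $\{e_i \pm e_j\}$ coordinates, triality permutes the three ``outer'' simple roots cyclically while fixing the central node; counting the resulting $\sigma$-orbits on positive roots gives three singleton orbits and three orbits of size three, hence $6$ orbits in total. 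Since the root system of $G_2 = SO(8)^{\sigma}$ (which is $G_2$ by the octonionic description, see Baez~\cite{b}) has exactly six positive roots, Corollary~\ref{countingrootorbits} applies: the number of $\sigma$-orbits in $\Phi$ equals the number of roots in $\Phi_\sigma$, so the $W_\sigma$-action on $H^*(BLG_2;F)$ factors through $Out(G_2)$.

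Next I would invoke the triviality of $Out(G_2)$ — $G_2$ has a connected, simply-laced-free Dynkin diagram with no nontrivial symmetries — to conclude that the $W_\sigma$-action on $H^*(BLG_2;F)$ is trivial. Theorem~\ref{thm0} then yields
$$ H^*(BL_\sigma SO(8);F) \cong H^*(BLG_2;F)^{W_\sigma} = H^*(BLG_2;F), $$
valid for coefficient fields $F$ whose characteristic is coprime to $|W_{SO(8)}| = 3 \cdot 2^6$, to the order of $[\sigma]$ (which is $3$), and to $|\pi_0(SO(8)^\sigma)| = |\pi_0(G_2)| = 1$; all three constraints are subsumed by requiring $\operatorname{char}(F)$ coprime to $6$. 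The statement for $\sigma^2$ follows identically, since $\sigma^2$ is also an order-three triality automorphism with the same fixed subgroup $G_2$ (equivalently, $\sigma^2 = \sigma^{-1}$ induces the inverse diagram automorphism, which has the same orbits).

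Finally I would compute $H^*(BLG_2;F)$ via Proposition~\ref{untwistedcase}. Since $\operatorname{char}(F)$ is coprime to $6$ and the Weyl group of $G_2$ has order $12$, $H^*(G_2;\Z)$ is $p$-torsion free, so $H^*(BLG_2;F) \cong H^*(G_2;F) \otimes H^*(BG_2;F)$. The exponents of $G_2$ are $1$ and $5$, so $H^*(G_2;F) \cong \Lambda(x_3,x_{11})$ with $\deg x_i = 2\cdot(\text{exponent})+1$, and $H^*(BG_2;F) \cong F[y_4,y_{12}]$ with $\deg y_i = \deg x_{i-1} + 1$; this gives the claimed $\Lambda(x_3,x_{11}) \otimes F[y_4,y_{12}]$. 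The main obstacle is the orbit count: getting the correct cycle structure of triality on the twelve positive roots of $D_4$, and hence the correct count of six orbits, requires care with the explicit coordinate description of the $S_3$-action — but once that matches the six positive roots of $G_2$, the rest is bookkeeping with results already in hand.
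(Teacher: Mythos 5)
Your proposal is correct and follows essentially the same route as the paper: count the six triality orbits on the twelve positive roots of $D_4$, match them against the six positive roots of $G_2$ to invoke Corollary \ref{countingrootorbits}, use the triviality of $Out(G_2)$ together with Theorem \ref{thm0}, and read off $H^*(BLG_2;F)$ from Proposition \ref{untwistedcase}. The only difference is that you spell out the degree bookkeeping for $G_2$ (exponents $1$ and $5$), which the paper delegates to Borel's tables.
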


\subsection{$E_6$ with involution}

The outer automorphism group of a compact, simply connected group of type $E_6$ is generated by an automorphism $\sigma$ of order two.  The induced action on the set of positive roots has 24 orbits.   The fixed point set $E_6^{\sigma}$ is isomorphic to $F_4$ which has 24 positive roots, so Corollary \ref{countingrootorbits} applies. The Weyl group of $E_6$ has order $51840=2^73^45$. Since  $F_4$ has trivial outer automorphism group we get from Theorem \ref{thm0} that.

\begin{prop}
For $\sigma \in Aut(E_6)$ not an inner automorphism, we have a cohomology isomorphism
$$ H^*(BL_{\sigma}E_6;F) \cong H^*(BLF_4; F) \cong \Lambda(x_3,x_{11},x_{15},x_{23}) \otimes F[y_4,y_{12}, y_{16},y_{24}]$$
for coefficient fields $F$ of characteristic greater to $30$.
\end{prop}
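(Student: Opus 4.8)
The plan is to run the same argument as in the preceding examples: use Theorem \ref{thm0} to reduce $H^*(BL_{\sigma}E_6;F)$ to the $W_{\sigma}$-invariants in $H^*(BLF_4;F)$, use Corollary \ref{countingrootorbits} to show the $W_{\sigma}$-action is trivial, and then read off the answer from Proposition \ref{untwistedcase}. The isomorphism produced will be the one induced by the inclusion $E_6^{\sigma}=F_4\hookrightarrow E_6$, via the commuting diagram (\ref{commagain}).

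First I would record the structural input. The Dynkin diagram of $E_6$ has automorphism group $\Z_2$, so "$\sigma$ not inner" means $[\sigma]$ is the non-trivial element of $Out(E_6)\cong\Z_2$ and hence has order $2$. The fixed subgroup $E_6^{\sigma}$ is a group of type $F_4$; since $F_4$ is at once simply connected and centreless this identification is unambiguous, and since the fixed-point set of a finite-order automorphism of a simply connected compact Lie group is connected, $\pi_0(E_6^{\sigma})=1$. By Proposition \ref{fincover} the isogeny type of $E_6$ is immaterial once $\mathrm{char}(F)$ is coprime to $3$ (the order of the centre of the simply connected form). Consequently the three exceptional hypotheses of Theorem \ref{thm0} --- $\mathrm{char}(F)$ coprime to $|W_{E_6}|=51840=2^7\cdot 3^4\cdot 5$, to $|[\sigma]|=2$, and to $\#\pi_0(E_6^{\sigma})=1$ --- all follow from $\mathrm{char}(F)>30$ (indeed $\mathrm{char}(F)\notin\{2,3,5\}$ is all that is used).

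Next I would verify the hypothesis of Corollary \ref{countingrootorbits}. By Lemma \ref{a torus} and the remark following Corollary \ref{countingrootorbits}, $\sigma$ may be taken to preserve a system of positive roots of $E_6$ and to act on roots through the order-two diagram automorphism (the folding $E_6\to F_4$). Counting its orbits on the $36$ positive roots of $E_6$: the $12$ positive roots fixed by the automorphism descend to the $12$ long positive roots of $F_4$, while the remaining $24$ positive roots fall into $12$ two-element orbits descending to the $12$ short positive roots of $F_4$, giving $24$ orbits in all. Since $F_4$ has $24$ positive roots, Corollary \ref{countingrootorbits} shows $W_{\sigma}$ acts on $H^*(BLF_4;F)$ through $Out(F_4)$, which is trivial because the $F_4$ diagram has no symmetries. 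Hence the $W_{\sigma}$-action is trivial and Theorem \ref{thm0} yields $H^*(BL_{\sigma}E_6;F)\cong H^*(BLF_4;F)^{W_{\sigma}}=H^*(BLF_4;F)$.

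Finally, since $\mathrm{char}(F)>30$ is coprime to $|W_{F_4}|=1152=2^7\cdot 3^2$, the integral cohomology $H^*(F_4;\Z)$ is $p$-torsion-free, so Proposition \ref{untwistedcase} applies: using that the exponents of $F_4$ are $1,5,7,11$ (so $H^*(F_4;F)\cong\Lambda(x_3,x_{11},x_{15},x_{23})$ and $\deg y_i=\deg x_i+1$) gives $H^*(BLF_4;F)\cong\Lambda(x_3,x_{11},x_{15},x_{23})\otimes F[y_4,y_{12},y_{16},y_{24}]$, which combined with the previous paragraph finishes the proof. The only step that is more than bookkeeping is the orbit count on the $E_6$ root system together with its match against the $24$ positive roots of $F_4$; everything else is a direct application of Sections 4--6 and standard structural facts about $E_6$ and $F_4$. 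A minor point to keep honest is that the identification $E_6^{\sigma}\cong F_4$ and the connectedness of $E_6^{\sigma}$ hold on the nose and not merely up to isogeny, but this is classical.
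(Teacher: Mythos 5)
Your proposal is correct and follows essentially the same route as the paper: verify the hypotheses of Theorem \ref{thm0}, count the $24$ orbits of $\sigma$ on the $36$ positive roots of $E_6$ against the $24$ positive roots of $F_4$ to invoke Corollary \ref{countingrootorbits}, note that $Out(F_4)$ is trivial, and read off the answer from Proposition \ref{untwistedcase}. You simply supply more of the bookkeeping (the explicit $12+12$ orbit decomposition, the connectedness of $E_6^{\sigma}$, and the exponents of $F_4$) than the paper's terse version does.
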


\end{document}